\newtheorem{theorem}{Theorem}[section]
\newtheorem{lemma}{Lemma}[section]
\newtheorem{definition}{Definition}[section]
\newtheorem{remark}{Remark}[section]
\theoremstyle{remark}
\theoremstyle{corollary}
\newtheorem{corollary}{Corollary}[section]
\theoremstyle{remark}
\theoremstyle{step}
\numberwithin{equation}{section}
\def\dfrac{\displaystyle\frac}
\def\supp{{\mathop\mathrm{\,supp\,}}}
\def\p{\partial}
\def\v{\varphi}
\def\supp{{\mathop\mathrm{\,supp\,}}}
\def\rr{{\mathbb R}}
\begin{document}
\title{{Critical Fujita exponent for a semilinear heat equation with degenerate coefficients}
\footnote
{Supported by the National Natural Science Foundation of China (No.11771023).}}

\date{}\maketitle
\vspace{-2cm}
\begin{center}
Xi Hu, Lin Tang

\end{center}
{\bf Abstract:} We prove the existence of a critical Fujita exponent for a non-homogeneous semilinear heat equation which involves degenerate coefficients. More precisely, in order to give a rather complete theory, we focus on two types of weights $w(x)=|x_1|^a$ or $w(x)=|x|^b$ where $a, b>0$ in a suitable range. The coefficients under consideration admit either a singularity at the origin or a line of singularities. In the latter case, the problem is related to the fractional Laplacian.
\vspace{0.2cm}\\
{\bf Keywords:} Semilinear heat equation; Fujita exponent; Degenerate coefficient
\vspace{0.2cm}\\
{\bf Mathematics Subject Classification:} 35B33, 35K58, 35K65

\section{Introduction}
Beginning with the classical paper by Fujita \cite{FH}, critical exponents for the global existence of solutions (not only positive ones but also sign-changing ones) were established for many classes of evolution problems. It seems almost impossible to make complete list of this topics. So we only refer a part of them for instance \cite{FU, GL, IK0, IK1, IK2, LH, MI, PR, WF1, WF2} and the references therein. Among others, in particular, Fujishima, Kawakami and Sire \cite{FU} showed the Fujita exponent for the following problem
\begin{equation*}\label{1}
\left \{
   \begin{array}{ll}
\p_t u-\mathrm{div}(w(x)\nabla u)=u^p,\quad ~&x\in\rr^n,\quad t>0,\\\tag{1.1}
u(x,0)=\v(x)\geq 0, & x\in \rr^n,
 \end{array}
 \right.
\end{equation*}
where the coefficient $w$ is either $w(x)=|x_1|^a$ with $a\in [0,1) $ if $n=1,2$ and $a\in [0, 2/n)$ if $n\geq 3$, or $w(x)=|x|^b$ with $b\in [0,1)$. They proved that, if $1<p\leq 1+\frac{2-\alpha}{n}$ where $
\alpha\in \{a,b\}$,  then global-in-time positive solutions of (1.1) do not exist. Furthermore they showed that, if $p> 1+\frac{2-\alpha}{n}$ where $\alpha\in \{a,b\}$, then, for some ``small'' initial function $\v$, there exists a global-in-time solution of (1.1).  The problem (1.1) is related to the fractional Laplacian $\p_tu+(-\Delta)^su=u^p, x\in \rr^n , t>0$, through the Caffarelli-Silvestre extension \cite{CS} and is a first attempt to develop a parabolic theory in this setting.

On the other hand, for the modified nonlocal equation $$(\p_t -\Delta)^su=u^p, \quad x\in \rr^n,\quad t>0,\eqno(1.2)$$
we consider the problem (1.2) from another point of view, invoking Caffarelli-Silvestre extension, one  gets a degenerate elliptic equation in the half space with a nonlinear dynamical boundary condition. In order to analyze this problem, one needs to compare this problem with the degenerate parabolic equation with nonlinear boundary condition in the half space. From this point of view, before treating this half space problem, we consider the similar problem in the whole space. More precisely,
we consider the following problem
\begin{equation*}\label{1}
\left \{
   \begin{array}{ll}
\p_t u-w^{-1}\mathrm{div}(w(x)\nabla u)=u^p,\quad ~&x\in\rr^n,\quad t>0\\\tag{1.3}
u(x,0)=u_0(x)\geq 0, & x\in \rr^n,
 \end{array}
 \right.
\end{equation*}
where the coefficient $w$ is either $w(x)=|x_1|^a$ with $a\in [0,1)$, or $w(x)=|x|^b$ with $b\in [0,n)$, $n\geq 1$, and $p>1.$

The purpose of this paper is to develop a global-in-time existence theory of mild solutions for the problem (1.3). We prove that there exists a critical exponent for the global existence of positive solutions of problem (1.3), the so-called Fujita exponent.

We first give the definition of a solution to (1.3).

Let $\Gamma=\Gamma (x,y,t)$ be the fundamental solution of $$\p_t v-w^{-1}\mathrm{div}(w(x)\nabla v)=0,\quad x\in \rr^n,\quad t>0,$$ with a pole at $(y,0).$ Under the condition either $w(x)=|x_1|^a$ with $a\in [0,1)$, or $w(x)=|x|^b$ with $b\in [0,n)$, this fundamental solution $\Gamma$ satisfies the mass conservation property, the semigroup one and suitable Gaussian estimates [see $\mathrm{(K1)}$--$\mathrm{(K3)}$ in Section 2]. By $\Gamma $, we define the solution of (1.3) as follows.
\begin{definition}
Let $u_0$ be a nonnegative measurable function in $\rr^n$. Let $T\in (0,\infty]$ and $u$ be a nonnegative measurable function in $\rr^n\times(0,T)$ such that $u\in L^\infty(0,T:L^\infty(\rr^n))$. Then we call $u$ a solution of  $(1.3)$ in $\rr^n\times (0,T)$ if $u$ satisfies
$$u(x,t)=\int_{\rr^n}\Gamma(x,y,t)u_0(y)w(y)dy+\int^t_0\int_{\rr^n}\Gamma(x,y,t-s)u(y,s)^pw(y)dyds<\infty \eqno(1.4)$$
for almost all $x\in \rr^n$ and $t\in (0,T)$. In particular, we call $u$ a global-in-time solution of $(1.3)$ if $u$ is a solution of $(1.3)$ in $\rr^n\times(0,\infty).$
\end{definition}
\begin{remark}
For the case of the semilinear heat equation, that is, $(1.3)$ with either $a=0$ or $b=0$, if $u_0\in L^\infty(\rr^n)$, then there exists a local-in-time solution $u$ of $(1.3)$ in $\rr^n\times(0,T)$ for some $T>0$ satisfying $(1.4)$; see $\cite{HW}$ and $\cite{WF}$.
\end{remark}

The previous definition is the well-known class of mild solutions and is natural to deal with parabolic problems. In our context, it is vital to notice that due to the non-homogeneity of the operator, the fundamental solution is not translation-invariant. Furthermore, there is no explicit expression of it, though bounds are known. This makes the theory harder.

On the other hand, we discuss the features of the weight $w(x)$. In this paper, we do not consider general weights since it is very complicated in this case to give precise results as our purpose is. We will consider two types of weights. The first one is $w(x)=|x_1|^a$ which belongs to the class $A_2$ of Muckenhoupt functions \cite{MB} if $a\in [0,1)$ and exhibits singularities along the line $x_1=0$. The other weight under consideration is $w(x)=|x|^b$ which is $A_2$ for $b\in [0,n)$ and exhibits a singularity at the origin $x=0$.

Before stating our main results, we first introduce some notations. For any $x\in \rr^n$ and $R>0$, we set $B(x,R):=\{y\in \rr^n:|x-y|<R\}$. Let $(\rr^n, w)$ be a measure space,  where $w$ belongs to the class $A_2$ of Muckenhoupt functions. For  any $1\leq r\leq \infty$, given $f$ a measurable function on a measure space $(\rr^n, w)$, we define the $L^r(w)$ norm of a function $f$ by $$\|f\|_{L^r(w)}:=\left(\int_{\rr^n}|f(x)|^rw(x)dx\right)^{\frac{1}{r}}.$$ For any measurable function $f$ on a measure space $(\rr^n, w)$,
$$\mu_f(\lambda):=w\left(\{x: |f(x)|>\lambda\}\right),\quad \lambda\geq 0,\eqno(1.5)$$
is the distribution function of $f$. We denote the non-increasing rearrangement of $f$ by $$f^*(s):=\inf \{\lambda>0: \mu_f(\lambda)\leq s\}.\eqno(1.6)$$
The spherical rearrangement of $f$ is denoted by $$f^{\sharp}(x):=f^*(c_n |x|^n),$$
where $c_n$ is the volume of the unit ball in $\rr^n$.

Then, for any $1\leq r\leq \infty$ and $1\leq \sigma\leq \infty$, we define the Lorentz space $L^{r,\sigma}(w)$ by
$$L^{r,\sigma}(w):=\{f: f~\text{is measurable function on a measure space $(\rr^n, w)$},~\|f\|_{L^{r,\sigma}(w)}<\infty\},$$
where
\begin{equation*}\label{1}
\|f\|_{L^{r,\sigma}(w)}:=\left \{
   \begin{array}{ll}
\displaystyle\left(\int^\infty_0\left[s^{\frac{1}{r}}f^*(s)\right]^\sigma\dfrac{ds}{s}\right)^{\frac{1}{\sigma}}\quad & \text{if}~1\leq \sigma<\infty,\\\tag{1.7}
\displaystyle{\sup_{s>0}}~s^{\frac{1}{r}}f^*(s) & \text{if}~\sigma=\infty.
\end{array}
\right.
\end{equation*}
The Lorentz $L^{r,\sigma}(w)$ is a Banach space and the following holds (see \cite{GR}):
\begin{itemize}
\item Let $1<r<\infty$. Then $f\in L^{r,\infty}(w)$ if and only if $$0\leq f^\sharp(x)\leq C_1|x|^{-n/r},\quad x\in \rr^n,\eqno(1.8)$$
for some constant $C_1$;
\item For $1<r<\infty$, it follows
\begin{equation*}\label{1}
\|f\|_{L^{r,\sigma}(w)}:=\left \{
   \begin{array}{ll}
\displaystyle r^{\frac{1}{\sigma}}\left(\int^\infty_0\left[s \mu_f(s)^{\frac{1}{r}}\right]^\sigma\dfrac{ds}{s}\right)^{\frac{1}{\sigma}}\quad & \text{if}~1\leq \sigma<\infty,\\\tag{1.9}
\displaystyle{\sup_{s>0}}~s \mu_f(s)^{\frac{1}{r}} & \text{if}~\sigma=\infty.
\end{array}
\right.
\end{equation*}
\item $L^{r,r}(w)=L^r(w)$ if $1<r< \infty$, $L^{\infty,\infty}(w)=L^\infty(w)=L^\infty$ and $L^{r,\sigma_1}(w)\subset L^{r,\sigma_2}(w)$ if $1\leq r\leq \infty$ and $1\leq \sigma_1\leq \sigma_2\leq \infty;$
\item Let $1\leq r_0\leq r\leq r_1\leq \infty$ be such that $$\frac{1}{r}=\frac{1-\theta}{r_0}+\frac{\theta}{r_1}\quad \text{for}~\theta\in [0,1].$$
Then it holds that $$\|f\|_{L^{r,\infty}(w)}\leq \|f\|^{1-\theta}_{L^{r_0,\infty}(w)}\|f\|^\theta_{L^{r_1,\infty}(w)},\quad f\in L^{r_0,\infty}(w)\cap L^{r_1,\infty}(w);\eqno(1.10)$$
\item Let $1\leq r_1\leq \infty$ and $r_2$ be the H\"{o}lder conjugate number of $r_1$, that is, $1/r_1+1/r_2=1.$
Then it holds that $$\|fg\|_{L^1(w)}\leq \|f\|_{L^{r_1,1}(w)}\|g\|_{L^{r_2,\infty}(w)},\quad f\in L^{r_1,1}(w),\quad g\in L^{r_2,\infty}(w).\eqno(1.11)$$
\end{itemize}

We now state the main results of this paper but several explanations are in order. In most of the parabolic problem dealing with homogeneous equations, an important role is played by the fundamental solution. It happens that one can deduce several strong results as soon as one has an explicit form of the fundamental solution, allowing to get estimates for the function and its derivatives; see \cite{IK1, IK3, IK4}. In problem (1.3), even if the coefficients are rather simple, such an explicit form is unavailable.

In what follows, we set $$p_*(\alpha):=1+\frac{2}{n+\alpha}\quad\text{for}~\alpha\in\{a,b\}.$$
Moreover, we assume either
\begin{itemize}
\item[$\mathrm{(A)}$] $w(x)=|x_1|^a$ with $a\in [0,1)$, or
\item[$\mathrm{(B)}$] $w(x)=|x|^b$ with $b\in [0, n)$.
\end{itemize}

The first theorem is concerned with the nonexistence of positive global-in-time solutions of (1.3).

\begin{theorem}
Assume either $\mathrm{(A)}$ or $\mathrm{(B)}$. Let $\alpha$ be such that $\alpha=a$ for the case $\mathrm{(A)}$ and $\alpha=b$ for the case $\mathrm{(B)}$.
Assume $1<p\leq p_*(\alpha).$ Then problem $(1.3)$ has no positive global-in-time solutions.
\end{theorem}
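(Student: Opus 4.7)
My plan is to argue by contradiction, using an iterative improvement of a pointwise lower bound on $u$ along the parabolic cone $\{|x|\le\sqrt{t}\}$; this is the classical Fujita iteration adapted to the present setting, with the effective dimension $n$ replaced by $n+\alpha$ because of the Gaussian estimates $(\mathrm{K1})$--$(\mathrm{K3})$ for the weighted fundamental solution $\Gamma$.

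First, a small reduction. Suppose $u$ is a positive global-in-time solution of $(1.3)$. Applying $(\mathrm{K3})$ to $(1.4)$ at some small positive time shows that $u$ becomes strictly positive on every ball, so after a harmless time shift we may assume $u_0\ge \eta\chi_{B_1}$ for some $\eta>0$. The same application of $(\mathrm{K3})$ to $(1.4)$ then gives the initial bound
$$u(x,t)\ \ge\ c_0\, t^{-(n+\alpha)/2},\qquad |x|\le\sqrt{t},\ t\ge 1.$$
Now iterate: if $u(x,s)\ge c_k s^{-\sigma_k}$ on $\{|x|\le\sqrt{s}\}$ for $s\ge 1$, insert this into the Duhamel term of $(1.4)$ restricted to $s\in[t/2,t]$ and $|y|\le\sqrt{s}$. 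On that range $|x-y|^2/(t-s)$ stays bounded when $|x|\le\sqrt{t}$, so $(\mathrm{K3})$ gives $\Gamma(x,y,t-s)\gtrsim t^{-(n+\alpha)/2}$; combined with $w(B(0,\sqrt{s}))\asymp s^{(n+\alpha)/2}$ in both cases $(\mathrm{A})$ and $(\mathrm{B})$ (within the admissible parameter ranges), a direct computation yields, for $|x|\le\sqrt{t}$,
$$u(x,t)\ \ge\ c_{k+1}\, t^{\,1-p\sigma_k},$$
i.e.\ the recursion $\sigma_{k+1}=p\sigma_k-1$, starting from $\sigma_0=(n+\alpha)/2$.

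Solving this linear recursion, $\sigma_k=p^k\bigl(\sigma_0-1/(p-1)\bigr)+1/(p-1)$, so $\sigma_k\to-\infty$ iff $\sigma_0<1/(p-1)$, iff $p<p_*(\alpha)$; in that subcritical range the lower bound at any fixed $(x,t)$ with $t$ large grows without bound in $k$, contradicting $u\in L^\infty$. In the critical case $p=p_*(\alpha)$ the recursion is stationary, $\sigma_k\equiv (n+\alpha)/2$, and the iteration must be refined to pick up logarithms: the exponent of $s$ in the key inner integral $\int_1^t s^{-p\sigma_k+(n+\alpha)/2}\,ds$ becomes exactly $-1$, producing a factor $\log t$ at each step. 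One obtains $u(x,t)\ge c_k t^{-(n+\alpha)/2}(\log t)^{\beta_k}$ with $\beta_{k+1}=p\beta_k+1$ and $\beta_0=0$, so $\beta_k=(p^k-1)/(p-1)\to\infty$; a straightforward bookkeeping of the constants $c_k$ shows that $c_k(\log t)^{\beta_k}\to\infty$ for every fixed $t$ with $\log t$ large enough, again contradicting boundedness of $u$.

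The main technical point will be the clean evaluation of the inner integral $\int_{|y|\le\sqrt{s}}\Gamma(x,y,t-s)w(y)\,dy$ uniformly in $|x|\le\sqrt{t}$ and $s\in[t/2,t]$: this requires $(\mathrm{K3})$ together with two-sided bounds on the weighted ball measure, and the estimates naturally split into the two cases $(\mathrm{A})$ and $(\mathrm{B})$ because the weight degenerates along a hyperplane in the first case and at a single point in the second. A secondary obstacle is the bookkeeping of the constants $c_k$ in the critical iteration, where one must check that the polynomial loss $1/(p\beta_k+1)$ arising from $\int_1^t s^{-1}(\log s)^{p\beta_k}\,ds=(\log t)^{p\beta_k+1}/(p\beta_k+1)$ does not overwhelm the geometric growth of $\beta_k$.
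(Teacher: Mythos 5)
Your overall strategy (a pointwise Fujita iteration on the parabolic cone with effective dimension $n+\alpha$) is a legitimate alternative to the paper's argument, and the subcritical half can be made to work, but as written there are two genuine problems. First, restricting Duhamel to $s\in[t/2,t]$ is the wrong half of the time interval: for $s$ close to $t$, with $|x|\le\sqrt t$ and $|y|\le\sqrt s$, the quantity $|x-y|^2/(t-s)$ is of order $t/(t-s)$ and is unbounded, so $(\mathrm{K3})$ does not give $\Gamma(x,y,t-s)\gtrsim t^{-(n+\alpha)/2}$ on that range. You need $s\in[s_0,t/2]$ (so that $t-s\ge t/2$), which is also the only way your key integral $\int_{1}^{t}s^{-p\sigma_k+(n+\alpha)/2}\,ds$ can actually arise --- over $[t/2,t]$ the $s$-integral contributes a bounded factor and no logarithm. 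This is precisely the choice made in (3.12) of the paper.

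Second, and more seriously, the critical-case bookkeeping is not ``straightforward'' and in the fixed-$t$ form you state it fails. Each iteration step actually produces $(\log(t/2))^{p\beta_k+1}$ (or $(\log(s/S_k))^{p\beta_k+1}$ with a $k$-dependent starting time $S_k$), not $(\log t)^{p\beta_k+1}$; converting one to the other costs a factor $(1-\log 2/\log t)^{p\beta_k+1}\approx e^{-c\,p^k/\log t}$ per step, and accumulating these losses gives $\log c_k\le -c'k\,p^k/\log t+O(p^k)$, which for every \emph{fixed} $t$ eventually dominates $\beta_k\log\log t\sim p^k\log\log t/(p-1)$, so $c_k(\log t)^{\beta_k}\to 0$ rather than $\infty$. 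The iteration can be repaired by carrying the hypothesis in the form $u(y,s)\ge a_k\,s^{-(n+\alpha)/2}\bigl(\log(s/S_k)\bigr)^{\beta_k}$ with $S_k$ growing geometrically and extracting the contradiction along a sequence $t_k\to\infty$ against the uniform bound $u\in L^\infty(0,\infty;L^\infty(\rn))$, but that is exactly the delicate step you have waved away. The paper avoids it entirely: Lemma 3.1 establishes the universal bound $t^{1/(p-1)}\|S(t)u_0w\|_\infty\le C^*$ with $C^*$ depending only on $p$ (not on $u_0$), obtained by iterating Jensen's inequality on the linear part; the subcritical case is then immediate, and the critical case needs only a single logarithm (the weighted mass on $\{|x|\le\sqrt t\}$ grows like $\log t$), after which restarting the equation from $u(\cdot,T)$ with arbitrarily large mass contradicts the universality of $C^*$. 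Either adopt that scheme or carry out the $t_k\to\infty$ version of your iteration explicitly.
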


In second theorem we give a sufficient condition for the existence of  global-in-time solutions of (1.3).

\begin{theorem}
Assume either $\mathrm{(A)}$ or $\mathrm{(B)}$. Let $\alpha$ be such that $\alpha=a$ for the case $\mathrm{(A)}$ and $\alpha=b$ for the case $\mathrm{(B)}$.
Assume $p>p_*(\alpha)$. Set $$r_*:=\dfrac{n+\alpha}{2}(p-1)>1.\eqno(1.12)$$
Then the following holds: \begin{itemize} \item[$\mathrm{(i)}$] There exists a positive constant $\delta$ such that, for any $u_0\in L^\infty\cap L^{r_*,\infty}(w)$ with $$\|u_0\|_{L^{r_*,\infty}(w)}<\delta,\eqno(1.13)$$
a unique global-in-time solution $u$ of $(1.3)$ exists and it satisfies $$\sup_{t>0}(1+t)^{\frac{n+\alpha}{2}\left(\frac{1}{r_*}-\frac{1}{q}\right)}\|u(t)\|_{L^{q,\infty}(w)}<\infty, \quad r_*\leq q\leq\infty.\eqno(1.14)$$
\item[$\mathrm{(ii)}$] Let $1\leq r\leq r_*$. Then there exists a positive constant $\delta$ such that, for any $u_0\in L^\infty\cap L^r(w)$ with
$$\|u_0\|^{\frac{r}{r_*}}_{L^r(w)}\|u_0\|^{1-\frac{r}{r_*}}_\infty<\delta,\eqno(1.15)$$
a unique  global-in-time solution $u$ of $(1.3)$ exists and it satisfies
$$\sup_{t>0}(1+t)^{\frac{n+\alpha}{2}\left(\frac{1}{r}-\frac{1}{q}\right)}\|u(t)\|_{L^{q}(w)}<\infty,\quad r\leq q\leq\infty.\eqno(1.16)$$
\end{itemize}
\end{theorem}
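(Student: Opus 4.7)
The plan is to solve the integral equation (1.4) by a Kato-type fixed-point argument. Write $S(t)f(x):=\int_{\rr^n}\Gamma(x,y,t)f(y)w(y)\,ds$ for the semigroup generated by $w^{-1}\mathrm{div}(w\nabla)$. From the Gaussian estimates (K1)--(K3) together with the doubling dimension $n+\alpha$ of the measure $w\,dx$, Marcinkiewicz interpolation yields the $L^r$--$L^q$ smoothing bounds
$$
\|S(t)f\|_{L^{q,\infty}(w)}\le C\,t^{-\frac{n+\alpha}{2}(\frac{1}{r}-\frac{1}{q})}\|f\|_{L^{r,\infty}(w)},\qquad 1\le r\le q\le\infty,
$$
and the analogous inequality for strong Lebesgue norms. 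These are the only analytic inputs used; the ``effective dimension'' $n+\alpha$ is exactly what fixes the Fujita exponent at $p_*(\alpha)=1+2/(n+\alpha)$.

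For part (i), fix an auxiliary exponent $q_0$ in the nonempty window $(r_*,pr_*)$ and set $\beta:=\frac{n+\alpha}{2}(\frac{1}{r_*}-\frac{1}{q_0})$. Work in the Banach space
$$
X=\Bigl\{u:\ \|u\|_X:=\sup_{t>0}\|u(t)\|_{L^{r_*,\infty}(w)}+\sup_{t>0}t^{\beta}\|u(t)\|_{L^{q_0,\infty}(w)}<\infty\Bigr\},
$$
and show that $\Phi(u)(t):=S(t)u_0+\int_0^t S(t-s)u(s)^p\,ds$ is a contraction on a small ball. The linear piece is handled by the smoothing estimate and the smallness condition (1.13). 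For the nonlinear piece one uses the Lorentz identity $\|u^p\|_{L^{q_0/p,\infty}(w)}=\|u\|_{L^{q_0,\infty}(w)}^p$ (from $(u^p)^*=(u^*)^p$) and the smoothing estimate to obtain
$$
\Bigl\|\int_0^t S(t-s)u(s)^p\,ds\Bigr\|_{L^{q_0,\infty}(w)}\le C\|u\|_X^p\int_0^t(t-s)^{-\frac{n+\alpha}{2q_0}(p-1)}s^{-p\beta}\,ds.
$$
The Beta integral converges exactly because $r_*<q_0<pr_*$ (equivalent to $\frac{n+\alpha}{2q_0}(p-1)<1$ and $p\beta<1$), and by scaling the result equals a constant multiple of $t^{-\beta}\|u\|_X^p$. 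The parallel estimate with target $L^{r_*,\infty}(w)$ controls the other component of $\|\cdot\|_X$ (using $q_0<pr_*$ so that $p/q_0>1/r_*$), and the analogous Lipschitz bounds complete the contraction argument. The full range (1.14) is then obtained by enlarging the scheme to include the $L^\infty$ norm (using the $L^{q_0,\infty}\to L^\infty$ smoothing to bootstrap the uniform bound) and interpolating via (1.10).

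For part (ii), the interpolation inequality $\|u_0\|_{L^{r_*,\infty}(w)}\le \|u_0\|_{L^r(w)}^{r/r_*}\|u_0\|_\infty^{1-r/r_*}$ shows that (1.15) implies (1.13), so existence and uniqueness follow from (i). To obtain the strong Lebesgue decay (1.16) one repeats the contraction in the space
$$
Y=\Bigl\{u:\sup_{t>0}\|u(t)\|_{L^r(w)}+\sup_{t>0}(1+t)^{\frac{n+\alpha}{2}(\frac{1}{r}-\frac{1}{q_1})}\|u(t)\|_{L^{q_1}(w)}<\infty\Bigr\}
$$
for a suitable $q_1\in(r,\infty)$ chosen analogously to $q_0$, this time invoking the strong $L^r(w)\to L^{q_1}(w)$ smoothing estimate and the identity $\|u^p\|_{L^{q_1/p}(w)}=\|u\|_{L^{q_1}(w)}^p$; interpolation with the uniform $L^\infty$ bound then covers the full range $r\le q\le\infty$.

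The main obstacle is the nonlinear estimate in part (i): the naive single-norm choice $q_0=r_*$ makes both endpoints of the Beta integral critical and the argument breaks down, so an auxiliary exponent $q_0\in(r_*,pr_*)$ must be introduced and two norms carried simultaneously. A secondary technical subtlety, caused by the absence of an explicit formula for $\Gamma$, is that all estimates must be extracted from the abstract Gaussian bounds (K1)--(K3); one has to verify that the loss of translation invariance does not spoil the dimensional exponent $n+\alpha$ in the smoothing estimates, since the sharpness of the Fujita threshold depends on the factor $\frac{n+\alpha}{2}$ appearing there.
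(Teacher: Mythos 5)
Your proposal is correct in substance and rests on the same analytic inputs as the paper --- the $L^{q,\infty}(w)$--$L^{r,\infty}(w)$ smoothing estimates with effective dimension $n+\alpha$ (Lemmas 2.3--2.5) and a Duhamel iteration with an auxiliary exponent to dodge the double criticality at $q=r_*$ --- but the execution differs at several points. The paper does not run a contraction: it uses the monotone iteration $u_{n+1}=u_1+\int_0^t S(t-s)u_n(s)^p w\,ds$, whose monotonicity guarantees that the pointwise limit exists, and then proves the bounds $\|u_n(t)\|_{L^{r_*,\infty}(w)}\le 2c_{**}\delta$ and $t^{1/(p-1)}\|u_n(t)\|_\infty\le 2c_{**}\delta$ by induction; the two norms carried are $L^{r_*,\infty}(w)$ and $L^\infty$, with the intermediate $L^{q,\infty}(w)$ bounds of (1.14) recovered afterwards by interpolation (1.10), rather than your pair $L^{r_*,\infty}$, $L^{q_0,\infty}$. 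Instead of your single Beta integral over $[0,t]$, the paper splits the Duhamel term at $t/2$ and uses different auxiliary exponents $\eta$ with $\eta\le r_*<\eta p$ on the two pieces (see (4.15)--(4.18)); the two devices are equivalent ways of avoiding the endpoint singularities. For part (ii) the paper, after reducing existence and uniqueness to part (i) by interpolation exactly as you do, obtains (1.16) not by a weighted-in-time contraction but via the scaling normalization $u_{0,\lambda}(x)=\lambda^{(n+\alpha)/r_*}u_0(\lambda x)$ with $\lambda$ chosen so that $\|u_{0,\lambda}\|_{L^r(w)}=\|u_{0,\lambda}\|_\infty$, followed by an induction split into short times $0<t\le2$ and long times $t>1/2$. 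Your route buys a cleaner uniqueness statement directly from the fixed-point argument; the paper's route works entirely with nonnegative iterates and never has to verify a Lipschitz property of $u\mapsto u^p$ on a ball.

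Two caveats on your sketch. First, the weak-type smoothing estimate is false at the endpoint $r=1$ (the paper's constant $C_r\to\infty$ as $r\to1$), so you must also impose $q_0>p$ to keep the source exponent $q_0/p$ above $1$; this is harmless since the window $(\max\{r_*,p\},pr_*)$ is nonempty. Second, the ``$L^{q_0,\infty}\to L^\infty$ smoothing bootstrap'' for the endpoint $q=\infty$ of (1.14) does not work by smoothing alone near $s=t$ when $p_*(\alpha)<p\le2$: the singularity $(t-s)^{-\frac{p(n+\alpha)}{2q_0}}$ is integrable only for $q_0>\frac{p(n+\alpha)}{2}$, which is incompatible with $q_0<pr_*$ unless $p>2$. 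The fix is to carry $\sup_{t>0}t^{1/(p-1)}\|u(t)\|_\infty$ as an additional norm and to estimate the piece over $[t/2,t]$ by $\|S(t-s)u(s)^pw\|_\infty\le\|u(s)\|_\infty^p$, which is exactly what the paper does in (4.15). With these adjustments your argument goes through.
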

\begin{remark}
As far as the regularity of the mild solutions constructed in this paper is concerned, Chiarenza and Serapioni $\cite{CH1, CH}$ considered degenerate parabolic equations with $A_2$ weights. However, their starting point are weak solutions. In order to upgrade our mild solutions to weak solutions one needs gradient bounds on the fundamental solution $\Gamma$, which are unavailable.
\end{remark}

As a direct consequence of Theorem 1.2, we get:
\begin{corollary}
Let $\alpha\in\{a,b\}$. Assume $p>p_*(\alpha)$. Then there exists a positive constant $\delta$ such that, if $$|u_0(x)|\leq \dfrac{\delta}{1+|x|^{2/(p-1)}},\quad x\in\rr^n,\eqno(1.17)$$
then a unique global-in-time solution $u$ of $(1.3)$ exists and it satisfies $(1.14)$.
\end{corollary}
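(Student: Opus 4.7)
The strategy is to derive the corollary directly from Theorem~1.2(i): it suffices to verify that any $u_0$ satisfying the pointwise bound (1.17) belongs to $L^\infty\cap L^{r_*,\infty}(w)$ and meets the smallness condition (1.13), possibly after shrinking the constant $\delta$ in (1.17). The inclusion $u_0\in L^\infty$ with $\|u_0\|_\infty\le \delta$ is immediate, so everything reduces to controlling the weighted Lorentz (quasi-)norm of the majorant $g(x):=\delta/(1+|x|^{2/(p-1)})$.

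To estimate $\|g\|_{L^{r_*,\infty}(w)}$ I would use the $\sigma=\infty$ form of the characterization (1.9), which reduces matters to the distribution function $\mu_g(\lambda)=w(\{g>\lambda\})$. Under either hypothesis (A) or (B) one has the weighted ball estimate $w(B(0,R))\le C R^{n+\alpha}$: for (A) this follows by Fubini in the $x_1$-direction, and for (B) by a scaling argument applied to $|x|^b$. The super-level set $\{g>\lambda\}$ is a ball of radius $((\delta/\lambda)-1)^{(p-1)/2}$ when $0<\lambda<\delta$ and is empty otherwise, so
$$
\mu_g(\lambda)\le C\Big(\frac{\delta}{\lambda}\Big)^{(p-1)(n+\alpha)/2},\qquad 0<\lambda<\delta.
$$

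The algebraic identity $r_*=(n+\alpha)(p-1)/2$ from (1.12) is precisely what makes the radial profile $|x|^{-2/(p-1)}$ critical for $L^{r_*,\infty}(w)$: inserting this exponent into the previous bound gives $\lambda\,\mu_g(\lambda)^{1/r_*}\le C\delta$ uniformly in $\lambda>0$, and hence $\|u_0\|_{L^{r_*,\infty}(w)}\le C\delta$ with $C$ depending only on $n$, $\alpha$ and $p$. Choosing the $\delta$ appearing in (1.17) smaller than $C^{-1}$ times the threshold furnished by Theorem~1.2(i) produces (1.13), and the corollary then follows verbatim from Theorem~1.2(i), including the decay statement (1.14). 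There is no genuine obstacle; the only point requiring a little care is the uniform ball-growth estimate $w(B(0,R))\lesssim R^{n+\alpha}$, which is what allows cases (A) and (B) to be handled by a single computation.
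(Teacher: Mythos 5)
Your proposal is correct and follows essentially the same route as the paper: reduce the corollary to the smallness condition (1.13) by showing $\|u_0\|_{L^{r_*,\infty}(w)}\leq C\delta$ and then invoke Theorem 1.2(i). The only difference is that you carry out the distribution-function computation explicitly via the ball-growth bound $w(B(0,R))\lesssim R^{n+\alpha}$ (which is indeed available from Lemma 2.1), whereas the paper compresses this verification into a one-line appeal to the characterization (1.8).
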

\begin{remark}
If $u_0(x)$ satisfies $(1.17)$, then it follows from $(1.8)$ that $u_0\in L^{r_*,\infty}(w)$. On the other hand, if $u_0(x)=O(|x|^{-2/(p-1)})$ as $|x|\to \infty$, then $u_0\notin L^{r_*}(w)$. This is a clear advantage in using $L^{r,\infty}(w)$ spaces instead of the classical $L^r(w)$ spaces.
\end{remark}
\begin{remark}
By Theorems $1.1$ and $1.2$ we see that $p_*(\alpha)$ is the Fujita exponent for problem $(1.3)$. In fact, if $\alpha=0$, then $p_*(0)=1+2/n$, which is the Fujita exponent for $(1.3)$ with $w(x)\equiv1.$
\end{remark}
\begin{remark}
In order to prove the regularity of the solution satisfying $(1.4)$, in spite of the case of the semilinear heat equation, we need suitable bounds for the derivatives and the translation-invariant property of the fundamental solution; see $\cite{FR}$ and $\cite{IK1}$.
However, unfortunately, it seems that they have been still left open. On the other hand, under our definition, for the sake of proving the existence/nonexistence of global-in-time solutions of $(1.3)$, we only need properties $\mathrm{(K1)}$--$\mathrm{(K3)}$ and decay estimates, which are given in Lemma $2.2.$
\end{remark}

The rest of this paper is organized as follows: In Section 2 we give some preliminary results on the fundamental solution and $w(B(x,r))$. In particular, in Lemma 2.1, we show the lower and upper estimates of $w(B(x,r))$. Furthermore, the decay estimates of the fundamental solution are established in Lemma 2.2. In Section 3 we prove the Theorem 1.1, which means that problem (1.3) has no positive global-in-time solutions. Finally, we prove the uniqueness and local existence of solutions of (1.3) and then obtain Theorem 1.2 in Section 4.

\section{Preliminaries}
A vital tool in our arguments is based on the use of the fundamental solution of the operator $\p_t-w^{-1}\mathrm{div}(w(x)\nabla\cdot)$. As already mentioned due to the non-homogeneity of the operator, an explicit formula is not known but bounds are available as follows.

Under condition either $(\mathrm{A})$ or $(\mathrm{B})$, the weights $w$ belonging to the class  $A_2$ of Muckenhoupt functions, the fundamental solution $\Gamma=\Gamma(x,y,t)$ has the following properties (see \cite{CU} and \cite[Section 4]{IK}):
\begin{itemize}
\item[$(\mathrm{K1})$] $\displaystyle\int_{\rr^n} \Gamma (x,y,t)w(x)dx=\int_{\rr^n}\Gamma(x,y,t)w(y)dy=1$  ~for $x,y\in \rr^n$ and $t>0;$
\item[$(\mathrm{K2})$] $\displaystyle\Gamma (x,y,t)=\int_{\rr^n}\Gamma (x,\xi,t-s)\Gamma(\xi,y,s)w(\xi)d\xi$  ~for $x,y\in \rr^n$ and $t>s>0;$
\item[$(\mathrm{K3})$] There exist positive constants $c_*$ and $C_*$ depending only on $n$ and $\alpha\in \{a,b\}$ such that
\begin{align*}
   &\dfrac{c_*}{\sqrt{w(B(x,\sqrt{t}))}\sqrt{w(B(y,\sqrt{t}))}}\mathrm{exp}\left(-\dfrac{|x-y|^2}{c_*t}\right)\leq\Gamma(x,y,t)\\  &\leq\dfrac{C_*}{\sqrt{w(B(x,\sqrt{t}))}\sqrt{w(B(y,\sqrt{t}))}}\mathrm{exp}\left(-\dfrac{|x-y|^2}{C_*t}\right)
\end{align*}
for $x,y\in \rr^n$ and $t>0$. Here $w(B(x,\sqrt{t})):=\int_{B(x,\sqrt{t})}w(z)dz.$
\end{itemize}

By $\mathrm{(A)}$ and $(\mathrm{B})$, we state a lemma on lower and upper estimates of $w(B(x,r))$. In what follows, we denote  positive constants $C$ and $C'$ depending only on  $n$ and $\alpha\in \{a,b\}$ and they may have different values also within the same  line.
\begin{lemma}
The following holds:
\begin{itemize}
\item[$\mathrm{(i)}$] Let $w(x)=|x_1|^a$ and assume condition $\mathrm{(A)}$. Then there exist positive constants $C$ and $C'$ depending only on $n$ and $a$ such that
  \begin{equation}
  w(B(x,r))\geq Cr^{n+a}\end{equation}
and
  \begin{equation}
  w(B(x,r))\leq C'\left\{
  \begin{aligned}
  &r^n|x_1|^a\qquad\text{if}~0<r\leq |x_1|,\\
  &r^{n+a}\quad\qquad \text{if}~r\geq |x_1|,
  \end{aligned}
  \right.
  \end{equation}
  for all $x\in \rr^n$ and $r>0.$
\item[$\mathrm{(ii)}$] Let $w(x)=|x|^b$ and assume $\mathrm{(B)}$. Then there exist positive constants $C$ and $C'$ depending only on $n$ and $b$ such that
\begin{equation}w(B(x,r))\geq Cr^{n+b}\end{equation}
and \begin{equation}
w(B(x,r))\leq C'\left\{
\begin{aligned}
&r^n|x|^b\qquad ~\text{if}~0<r\leq |x|,\\
&r^{n+b}\qquad \quad\text{if}~r\geq |x|,
 \end{aligned}
 \right.
\end{equation}
for all $x\in \rr^n$ and $r>0.$
\end{itemize}
\end{lemma}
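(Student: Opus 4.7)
The plan is to prove the upper and lower bounds by direct computation, exploiting that in both cases the weight depends on only one quantity (either the first coordinate $x_1$ or the radial distance $|x|$), so the key is a dichotomy based on whether the radius $r$ is smaller or larger than this quantity.

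For the upper bounds in case (i), I would write $y=(y_1,y')\in\rr\times\rr^{n-1}$ and use Fubini to express
$w(B(x,r))=C_{n-1}\int_{-r}^{r}|x_1+s|^a(r^2-s^2)^{(n-1)/2}\,ds$.
When $r\leq|x_1|$, every $y\in B(x,r)$ satisfies $|y_1|\leq|x_1|+r\leq 2|x_1|$, so $|y_1|^a\leq 2^a|x_1|^a$ can be pulled out, producing $w(B(x,r))\leq C'|x_1|^ar^n$. When $r\geq|x_1|$, the same pointwise bound reads $|y_1|\leq 2r$, so $|y_1|^a\leq 2^ar^a$, and integration over $B(x,r)$ gives $w(B(x,r))\leq C'r^{n+a}$. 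Case (ii) is even more direct: when $r\leq|x|$ one has $|y|\leq 2|x|$ on $B(x,r)$, giving the estimate by $C'|x|^br^n$; when $r\geq|x|$ one uses $B(x,r)\subset B(0,2r)$ together with the explicit calculation $\int_{B(0,2r)}|y|^b\,dy=C'r^{n+b}$ (which converges precisely because $b\in[0,n)$, so $b>-n$).

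For the lower bounds, the idea is to locate inside $B(x,r)$ a smaller concentric ball on which the weight is bounded below by a fixed power of $r$. Concretely, I would choose a point $y^\ast$ with $|y^\ast-x|=r/2$ and whose relevant coordinate has magnitude at least $r/2$. In case (i), take $y^\ast=(x_1+\tfrac{r}{2}\mathrm{sgn}(x_1),x')$ (using $+r/2$ if $x_1=0$); then $|y^\ast_1|\geq r/2$, and the triangle inequality gives $B(y^\ast,r/4)\subset B(x,r)$ with $|y_1|\geq r/4$ throughout this sub-ball, so
$w(B(x,r))\geq \int_{B(y^\ast,r/4)}|y_1|^a\,dy\geq (r/4)^a|B(y^\ast,r/4)|=Cr^{n+a}$.
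In case (ii) I would take $y^\ast=x+(r/2)e$ with $e=x/|x|$ (any unit vector if $x=0$), so that $|y^\ast|\geq r/2$ and $B(y^\ast,r/4)\subset B(x,r)$; the same argument with $|y|\geq r/4$ yields $w(B(x,r))\geq Cr^{n+b}$.

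There is no serious obstacle here: everything reduces to elementary volume/radial integrals and triangle-inequality bookkeeping, and the ranges $a\in[0,1)$ and $b\in[0,n)$ only enter to ensure the integrals $\int_{B(0,R)}|y|^b\,dy$ and $\int_{-1}^{1}|s|^a(1-s^2)^{(n-1)/2}\,ds$ are finite. The one point that needs a touch of care is the lower bound when the center $x$ lies on the singular set ($x_1=0$ or $x=0$), which is exactly why the construction of $y^\ast$ must shift away from this set by a definite fraction of $r$ before shrinking to a sub-ball of radius $r/4$.
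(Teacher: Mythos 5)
Your argument is correct. The upper bounds are obtained exactly as in the paper: both you and the authors reduce to the pointwise estimate $|y_1|\le |x_1|+r$ on the ball (resp.\ $|y|\le |x|+r$), which gives $w(B(x,r))\le \omega_n r^n(|x_1|+r)^a$, and then split according to whether $r\le |x_1|$ or $r\ge |x_1|$. The lower bounds, however, are proved by a genuinely different device. The paper first asserts that the origin-centered ball minimizes the integral, i.e.\ $\int_{B(x,r)}w\ge \int_{B(0,r)}w$ (a rearrangement-type fact, stated there somewhat loosely as the weight being ``monotonically increasing with respect to the distance from the origin''), and then evaluates $\int_{B(0,r)}|y_1|^a\,dy$ explicitly as a beta-function multiple of $r^{n+a}$. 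You instead locate an off-center sub-ball $B(y^\ast,r/4)\subset B(x,r)$ whose points stay at distance at least $r/4$ from the singular set, so that the weight is bounded below pointwise by $(r/4)^\alpha$ there; this yields $Cr^{n+\alpha}$ directly. Your route is more elementary and sidesteps the minimization claim entirely (which, as stated in the paper, would itself need a short Hardy--Littlewood-type justification), at the cost of a less explicit constant; the paper's route gives the sharp constant $\omega_{n-1}B(\tfrac{a+1}{2},\tfrac{n+1}{2})$. One cosmetic remark: your aside that convergence of $\int_{B(0,R)}|y|^b\,dy$ requires $b>-n$ is true but irrelevant here, since $b\ge 0$ already makes the integrand locally bounded; the restrictions $a\in[0,1)$, $b\in[0,n)$ play no role in this lemma beyond $a,b\ge 0$.
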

\begin{proof}
We first prove assertion $\mathrm{(i)}$. Since $w(y)=|y_1|^a$ is monotonically increasing function with respect to the distance from the origin, we have
\begin{align*}
w(B(x,r))&=\int_{B(x,r)}|y_1|^ady\geq \int_{B(0,r)}|y_1|^ady\\
&=\int_{-r}^r|y_1|^a\left(\int_{|y'|_{n-1}<\sqrt{r^2-y^2_1}}dy'\right)dy_1\\
&=2\omega_{n-1}\int_{0}^ry_1^a(r^2-y^2_1)^{\frac{n-1}{2}}dy_1\\
&=\omega_{n-1}r^{n+a}\int_{0}^1\xi^{\frac{a-1}{2}}(1-\xi)^{\frac{n-1}{2}}d\xi\\
&=\omega_{n-1}B\left(\frac{a+1}{2},\frac{n+1}{2}\right)~r^{n+a}
\end{align*}
for all $x\in \rr^n$ and $r>0$, where $y=(y_1,y')\in \rr^n$, $|\cdot|_{n-1}$ denotes the usual Euclidean norm in $\rr^{n-1}$, $\omega_{n-1}$ denotes the volume of the unit ball in $\rr^{n-1}$ and $B(\cdot,\cdot)$ denotes the beta function. This implies (2.1).

On the other hand, since $w(y)=|y_1|^a$ depends only on $y_1$ variable, for any $x=(x_1,x')\in \rr\times \rr^{n-1}$, we can choose a point $x_*=(x_1,0)$ such that
$$\int_{B(x,r)}w(y)dy=\int_{B(x_*,r)}w(y)dy,\quad r>0.\eqno(2.5)$$
Moreover, for any $r>0$, we see that $$|y_1|\leq |x_1|+r,\quad y\in B(x_*,r).\eqno(2.6)$$

Since $a\geq 0$, using (2.5) and (2.6),  for any $x\in \rr^n$ and $r>0$, we obtain
\begin{align*}
\int_{B(x,r)}w(y)dy&=\int_{B(x_*,r)}|y_1|^ady\\
&\leq (|x_1|+r)^a\int_{B(x_*,r)}dy=\omega_nr^n(|x_1|+r)^a.
\end{align*}
This implies (2.2). Hence assertion $\mathrm{(i)}$ holds.

Next we prove assertion $\mathrm{(ii)}$. Since $w(y)=|y|^b$ is monotonically increasing function with respect to the distance from the origin, we have
\begin{align*}
w(B(x,r))&=\int_{B(x,r)}|y|^bdy\geq \int_{B(0,r)}|y|^bdy\\
&=\frac{n\omega_n}{n+b}r^{n+b}
\end{align*}
for all $x\in \rr^n$ and $r>0$. This implies (2.3). On the other hand, for any $r>0$, we see that $$|y|\leq |x|+r,\quad y\in B(x,r).\eqno(2.7)$$
Since $b\geq0$, using $(2.7)$, for any $x\in \rr^n$ and $r>0$, we obtain
\begin{align*}
\int_{B(x,r)}w(y)dy&=\int_{B(x,r)}|y|^bdy\\
&\leq w_n(|x|+r)^br^n.
\end{align*}
This implies (2.4). Hence assertion $\mathrm{(ii)}$ holds, and Lemma 2.1 follows.
\end{proof}

Combing Lemma 2.1 with $\mathrm{(K3)},$ under condition $\mathrm{(A)}$,  we immediately deduce that
\begin{align*}
d&\times\min\left\{t^{-\frac{n}{4}}|x_1|^{-\frac{a}{2}},t^{-\frac{n+a}{4}}\right\}\times\min\left\{t^{-\frac{n}{4}}|y_1|^{-\frac{a}{2}},t^{-\frac{n+a}{4}}\right\}
\mathrm{exp}\left(-\dfrac{|x-y|^2}{dt}\right)\\\tag{2.8}
&\leq \Gamma(x,y,t)\leq Dt^{-\frac{n+a}{2}}\mathrm{exp}\left(-\frac{|x-y|^2}{Dt}\right)
\end{align*}
for $x,y\in \rr^n$ and $t>0$. Here $D$ and $d$ are positive constants depending only on $n$ and $a$. Similarly to (2.8), in the case $\mathrm{(B)}$, we get that
\begin{align*}
d&\times\min\left\{t^{-\frac{n}{4}}|x|^{-\frac{b}{2}},t^{-\frac{n+b}{4}}\right\}\times\min\left\{t^{-\frac{n}{4}}|y|^{-\frac{b}{2}},t^{-\frac{n+b}{4}}\right\}
\mathrm{exp}\left(-\dfrac{|x-y|^2}{dt}\right)\\\tag{2.9}
&\leq \Gamma(x,y,t)\leq Dt^{-\frac{n+b}{2}}\mathrm{exp}\left(-\frac{|x-y|^2}{Dt}\right)
\end{align*}
for $x,y\in \rr^n$ and $t>0$. Here $D$ and $d$ are positive constants depending only on $n$ and $b$.
By (2.8) and $(2.9)$, we get $$\Gamma(x,y,t)\leq Dt^{-\frac{n+\alpha}{2}}$$
for $x,y\in \rr^n$ and $t>0$. Here $\alpha\in\{a,b\}.$ This together with $\mathrm{(K1)}$ implies that
$$\|\Gamma(\cdot,y,t)\|_{L^r(w)}\leq Ct^{-\frac{n+a}{2}\left(1-\frac{1}{r}\right)},\quad \|\Gamma(x,\cdot,t)\|_{L^r(w)}\leq Ct^{-\frac{n+a}{2}\left(1-\frac{1}{r}\right)},\eqno(2.10)$$
for any $1\leq r\leq \infty,$ where $C$ depends only on $n$ and $\alpha\in \{a,b\}$. Moreover, we have the following.
\begin{lemma}
Assume either $\mathrm{(A)}$ or $\mathrm{(B)}$. Let $\alpha$ be such that  $\alpha=a$ for the case $\mathrm{(A)}$ and $\alpha=b$ for the case $\mathrm{(B)}$. Then, for any $1\leq r<\infty$, there exists a positive constant $C$ depending only on $\alpha, r$ and $n$ such that $$\|\Gamma(x,\cdot,t)\|_{L^{r,1}(w)}\leq Ct^{-\frac{n+a}{2}\left(1-\frac{1}{r}\right)},\eqno(2.11)$$
for $x\in \rr^n$ and $t>0.$
\end{lemma}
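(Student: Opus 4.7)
The plan is to derive (2.11) by interpolating between the two $x$-uniform endpoint bounds that have already been established for $\Gamma(x,\cdot,t)$: namely, $\|\Gamma(x,\cdot,t)\|_{L^1(w)}=1$ from (K1), and $\|\Gamma(x,\cdot,t)\|_{L^\infty}\le D\,t^{-(n+\alpha)/2}$, which is the pointwise consequence of (2.8)/(2.9). Since $L^{1,1}(w)=L^1(w)$, the case $r=1$ is immediate from (K1), so the work is the case $1<r<\infty$, and the argument is just the real-interpolation identity $(L^1(w),L^\infty)_{1-1/r,\,1}=L^{r,1}(w)$ made explicit on the single function $f=\Gamma(x,\cdot,t)$.

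For the main case, I will use the representation of the Lorentz norm by the decreasing rearrangement from (1.7), namely
\[
\|f\|_{L^{r,1}(w)}=\int_0^\infty f^{*}(s)\,s^{1/r-1}\,ds,
\]
where $f^{*}$ is taken with respect to the weighted distribution function $\mu_f$ defined in (1.5). Two elementary properties of $f^{*}$ will do the job: first, $f^{*}(s)\le\|f\|_{L^\infty}$ for all $s>0$; and second, since $f^{*}$ is non-increasing, $s\,f^{*}(s)\le\int_0^s f^{*}(\tau)\,d\tau\le\|f\|_{L^1(w)}$, so that $f^{*}(s)\le\|f\|_{L^1(w)}/s$. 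Setting $A:=\|f\|_{L^1(w)}$, $B:=\|f\|_{L^\infty}$ and splitting the integral at $s_0:=A/B$, the first bound controls $\int_0^{s_0}$ by $r\,B\,s_0^{1/r}=r\,A^{1/r}B^{1-1/r}$ and the second controls $\int_{s_0}^\infty$ by $\frac{r}{r-1}A^{1/r}B^{1-1/r}$, where convergence at infinity uses $r>1$. Adding these gives the interpolation inequality
\[
\|f\|_{L^{r,1}(w)}\le\frac{r^2}{r-1}\,\|f\|_{L^1(w)}^{1/r}\,\|f\|_{L^\infty}^{1-1/r}.
\]

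Applying this to $f=\Gamma(x,\cdot,t)$ with $A=1$ and $B\le D\,t^{-(n+\alpha)/2}$ yields exactly
\[
\|\Gamma(x,\cdot,t)\|_{L^{r,1}(w)}\le\frac{r^2}{r-1}\,D^{1-1/r}\,t^{-\frac{n+\alpha}{2}\left(1-\frac{1}{r}\right)},
\]
which is (2.11) with a constant depending only on $\alpha$, $r$ and $n$. I do not foresee a real obstacle here: the only point to verify carefully is that the rearrangement-based facts $f^{*}(s)\le A/s$ and $f^{*}(s)\le B$ remain valid in the weighted setting, but this is immediate because $f^{*}$ and $\mu_f$ in the paper are defined throughout with respect to the measure $w(y)\,dy$, so the standard layer-cake identity $\int f^{*}=\int|f|\,w$ still holds. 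Thus the whole proof reduces to invoking (K1), reading off the $L^\infty$ bound from the Gaussian estimates of Section 2, and carrying out the three-line splitting above.
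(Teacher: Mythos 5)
Your proof is correct, but it takes a genuinely different and substantially shorter route than the paper. The paper works directly with the Gaussian majorant $f(y)=Dt^{-(n+a)/2}e^{-|x-y|^2/(Dt)}$: it computes the weighted distribution function $\mu_f(\lambda)=w(B(x,\rho(\lambda)))$, invokes the volume estimates of Lemma 2.1 (with a case split according to whether the radius exceeds $|x_1|$), inverts to get a two-regime bound on $f^*(s)$ at the threshold $s=|x_1|^{n+a}$, and then integrates $s^{1/r-1}f^*(s)$, absorbing the $|x_1|$-dependent term via $s^{a/(2r)}e^{-cs}\le C$. You bypass all of this: you use only the two $x$-uniform facts $\|\Gamma(x,\cdot,t)\|_{L^1(w)}=1$ from $\mathrm{(K1)}$ and $\|\Gamma(x,\cdot,t)\|_{L^\infty}\le Dt^{-(n+\alpha)/2}$ from (2.8)--(2.9), together with the elementary real-interpolation bound $\|f\|_{L^{r,1}(w)}\le C_r\|f\|_{L^1(w)}^{1/r}\|f\|_{L^\infty}^{1-1/r}$, whose proof via $f^*(s)\le\min\{B,A/s\}$ and the split at $s_0=A/B$ is valid verbatim in the weighted setting. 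This is cleaner, needs neither Lemma 2.1 nor the Gaussian decay, and applies to any kernel with unit mass and the stated sup bound. The one quantitative difference worth noting: your splitting yields $C_r=r^2/(r-1)$, which blows up as $r\to1^+$, whereas the paper's explicit computation produces a constant bounded near $r=1$; since Lemma 2.2 is later applied in (2.24) with $r=q/(q-1)$, the degeneration as $r\to 1^+$ would undermine the claim in Lemma 2.5 that $c_2$ stays bounded as $q\to\infty$. This is easily repaired within your framework: since $s\mapsto s^{1/r-1}$ is decreasing, the extremal rearrangement subject to $f^*\le B$ and $\int f^*\le A$ is $B\chi_{[0,A/B]}$, which gives the sharp constant $C_r=r$, bounded for $r$ near $1$. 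With that refinement your argument fully replaces the paper's computation.
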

\begin{proof}
By $(2.8)$ and $(2.9)$, if we can prove (2.11) for the case $\mathrm{(A)}$, then, replacing  $|x_1|$ and $a$ with $|x|$ and $b$, respectively, we can obtain (2.11) for the case $\mathrm{(B)}$. So it suffices to prove (2.11) for the case $\mathrm{(A)}$.

Assume $\mathrm{(A)}$. For any $x\in \rr^n$ and $t>0$, set $$f(y):=Dt^{-\frac{n+a}{2}}e^{-\frac{|x-y|^2}{Dt}}\quad\text{for}~y\in\rr^n, \eqno(2.12)$$
where $D$ is the constant given in (2.8). By (1.5), we get
$$\mu_f(\lambda)=w\left(\{y:|f(y)|>\lambda\}\right)=w\left(B\left(x,\sqrt{-Dt\mathrm{log}\left(D^{-1}t^{\frac{n+a}{2}}\lambda\right)}\right)\right).$$
Furthermore, using Lemma 2.1, we obtain
\begin{equation*}\label{1}
\mu_f(\lambda)    \leq C'\left \{
   \begin{array}{ll}
\left[-Dt\mathrm{log}\left(D^{-1}t^{\frac{n+a}{2}}\lambda\right)\right]^{\frac{n}{2}}|x_1|^a\quad & \text{if} ~0< \sqrt{-Dt\mathrm{log}\left(D^{-1}t^{\frac{n+a}{2}}\lambda\right)}\leq |x_1|,\\\tag{2.13}
\left[-Dt\mathrm{log}\left(D^{-1}t^{\frac{n+a}{2}}\lambda\right)\right]^{\frac{n+a}{2}}& \text{if}~ \sqrt{-Dt\mathrm{log}\left(D^{-1}t^{\frac{n+a}{2}}\lambda\right)}\geq |x_1|,
 \end{array}
 \right.
\end{equation*}
where $C'$ is the constant given in (2.2). By (1.6) and (2.13), we deduce that
\begin{equation*}\label{1}
f^*(s)    \leq\left \{
   \begin{array}{ll}
Dt^{-\frac{n+a}{2}}e^{-\frac{C''}{D}\left(t^{-\frac{n}{2}}|x_1|^{-a}s\right)^\frac{2}{n}}\quad &\text{if}~s\geq |x_1|^{n+a},\\\tag{2.14}
Dt^{-\frac{n+a}{2}}e^{-\frac{C''}{D}\left(t^{-\frac{n+a}{2}}s\right)^{\frac{2}{n+a}}}\quad &\text{if}~ s<|x_1|^{n+a},
 \end{array}
 \right.
\end{equation*}
where $C''$ depends only on $n$ and $a$.
Then, by (1.7) and $(2.14)$ we have
\begin{align*}
\|f\|_{L^{r,1}(w)}&=\int^\infty_0s^{\frac{1}{r}-1}f^*(s)ds\\
&=Dt^{-\frac{n+a}{2}}\int^{|x_1|^{n+a}}_0s^{\frac{1}{r}-1}e^{-\frac{C''}{D}\left(t^{-\frac{n+a}{2}}s\right)^{\frac{2}{n+a}}}ds\\
&\quad+Dt^{-\frac{n+a}{2}}\int^\infty_{|x_1|^{n+a}}s^{\frac{1}{r}-1}e^{-\frac{C''}{D}\left(t^{-\frac{n}{2}}|x_1|^{-a}s\right)^\frac{2}{n}}ds\\
&\leq Dt^{-\frac{n+a}{2}\left(1-\frac{1}{r}\right)}\int^\infty_0\xi^{\frac{1}{r}-1}e^{-\frac{C''}{D}\xi^{\frac{2}{n+a}}}d\xi\\
&\quad+Dt^{-\frac{n+a}{2}}t^{\frac{n}{2r}}|x_1|^{\frac{a}{r}}\int^\infty_{t^{-\frac{n}{2}}|x_1|^n}\xi^{\frac{1}{r}-1}e^{-\frac{C''}{D}\xi^{\frac{2}{n}}}d\xi\\
&\leq Ct^{-\frac{n+a}{2}\left(1-\frac{1}{r}\right)}+Dt^{-\frac{n+a}{2}}t^{\frac{n}{2r}}|x_1|^{\frac{a}{r}}e^{-\frac{C''}{2D}\frac{|x_1|^2}{t}}
\int^\infty_{t^{-\frac{n}{2}}|x_1|^n}\xi^{\frac{1}{r}-1}e^{-\frac{C''}{2D}\xi^{\frac{2}{n}}}d\xi\\\tag{2.15}
&\leq Ct^{-\frac{n+a}{2}\left(1-\frac{1}{r}\right)}+Ct^{-\frac{n+a}{2}}t^{\frac{n}{2r}}|x_1|^{\frac{a}{r}}e^{-\frac{C''}{2D}\frac{|x_1|^2}{t}}
\end{align*}
for any $1\leq r<\infty,$ where $C$ depends only on $a, r$ and $n$. Since $s^{\frac{a}{2r}}e^{-\frac{C''}{2D}s}\leq C$ for all $s\geq 0,$ we see that
\begin{align*}
&t^{-\frac{n+a}{2}}t^{\frac{n}{2r}}|x_1|^{\frac{a}{r}}e^{-\frac{C''}{2D}\frac{|x_1|^2}{t}}\\
&= t^{-\frac{n+a}{2}}t^{\frac{n}{2r}}|x_1|^{\frac{a}{r}}\left(\frac{|x_1|^2}{t}\right)^{-\frac{a}{2r}}
\left(\frac{|x_1|^2}{t}\right)^{\frac{a}{2r}}e^{-\frac{C''}{2D}\frac{|x_1|^2}{t}}\\
&\leq Ct^{-\frac{n+a}{2}\left(1-\frac{1}{r}\right)}.
\end{align*}
This together with $(2.15)$ implies that $$\|f\|_{L^{r,1}(w)}\leq Ct^{-\frac{n+a}{2}\left(1-\frac{1}{r}\right)},\quad r\in [1,\infty).\eqno(2.16)$$
By (2.8) and (2.12) we see that $\Gamma(x,y,t)\leq f(y)$ for $x,y\in \rr^n$ and $t>0$, and it follows from $(2.16)$ that
$$\|\Gamma(x,\cdot,t)\|_{L^{r,1}(w)}\leq \|f\|_{L^{r,1}(w)}\leq Ct^{-\frac{n+a}{2}\left(1-\frac{1}{r}\right)},\quad r\in [1,\infty),$$
for $x\in \rr^n$ and $t>0$. Hence the proof of Lemma 2.2 is complete.
\end{proof}

For any measurable function $\varphi$, under condition either $\mathrm{(A)}$ or $\mathrm{(B)}$, we define $$[S(t)\varphi w](x):=\int_{\rr^n}\Gamma(x,y,t)\varphi(y)w(y)dy\eqno(2.17)$$
for all $x\in \rr^n$ and $t>0.$

We will prove $L^q(w)$--$L^r(w)$ estimate and $L^{q,\infty}(w)$--$L^{r,\infty}(w)$ estimate for $S(t)\varphi w$. We first give $L^q(w)$--$L^r(w)$ estimate.
\begin{lemma}
For any $\varphi\in L^q(w)$ and $1\leq q \leq r\leq \infty,$ it holds that $$\|S(t)\varphi w\|_{L^r(w)}\leq c_1t^{-{\frac{n+a}{2}}\left(\frac{1}{q}-\frac{1}{r}\right)}\|\varphi\|_{L^q(w)},\quad t>0.$$
Here $c_1$ can be taken so that it depends only on $n$ and $\alpha\in\{a,b\}.$
\end{lemma}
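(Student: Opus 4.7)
The plan is to establish two endpoint estimates and then combine them: (i) $S(t):L^q(w)\to L^q(w)$ with operator norm at most $1$, and (ii) $S(t):L^q(w)\to L^\infty$ with operator norm of order $t^{-(n+\alpha)/(2q)}$. These two bounds combine, either via Riesz--Thorin interpolation or via a direct Jensen argument, into the claimed $L^q(w)\to L^r(w)$ estimate with the sharp decay rate in $t$.

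For endpoint (i) I would exploit the probability-measure interpretation furnished by $\mathrm{(K1)}$. For fixed $x$ and $t$, $\Gamma(x,y,t)w(y)\,dy$ is a probability measure on $\rn$, so for $1\le q<\infty$ Jensen's inequality applied to $|\cdot|^q$ gives
$$|S(t)\varphi w(x)|^q\le\int_{\rn}\Gamma(x,y,t)|\varphi(y)|^q w(y)\,dy;$$
integrating against $w(x)\,dx$, exchanging the order of integration, and using the other half of $\mathrm{(K1)}$ yields $\|S(t)\varphi w\|_{L^q(w)}\le\|\varphi\|_{L^q(w)}$. The case $q=\infty$ is immediate from $\mathrm{(K1)}$. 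For endpoint (ii), H\"older applied to the $w$-weighted pairing delivers
$$|S(t)\varphi w(x)|\le\|\Gamma(x,\cdot,t)\|_{L^{q'}(w)}\|\varphi\|_{L^q(w)},$$
with $q'$ the H\"older conjugate of $q$, and the bound (2.10) supplies $\|\Gamma(x,\cdot,t)\|_{L^{q'}(w)}\le Ct^{-(n+\alpha)(1-1/q')/2}=Ct^{-(n+\alpha)/(2q)}$ uniformly in $x\in\rn$.

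To combine the two endpoints by hand in the range $q\le r<\infty$, I would write
$$|S(t)\varphi w(x)|^r=|S(t)\varphi w(x)|^{r-q}\cdot|S(t)\varphi w(x)|^q,$$
control the first factor by (ii) and the second by the Jensen bound from (i), then integrate in $x$ against $w(x)\,dx$ and invoke $\mathrm{(K1)}$ one more time via Fubini. After taking $r$-th roots this gives
$$\|S(t)\varphi w\|_{L^r(w)}\le C^{(r-q)/r}t^{-(n+\alpha)(1/q-1/r)/2}\|\varphi\|_{L^q(w)},$$
since $(r-q)/(qr)=1/q-1/r$; because $(r-q)/r\in[0,1]$, the constant is dominated by $\max(1,C)$ and depends only on $n$ and $\alpha$. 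The case $r=\infty$ is already (ii), and Riesz--Thorin interpolation between (i) and (ii) would give the same conclusion via a standard route.

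Since every ingredient --- $\mathrm{(K1)}$, the pointwise Gaussian bound on $\Gamma$, and the weighted $L^{q'}$ estimate (2.10) --- has already been established, I do not anticipate a real obstacle. The only point requiring care is the bookkeeping of the constant to confirm that it depends only on $n$ and $\alpha$ (and in particular remains bounded as $\theta=(r-q)/r$ varies over $[0,1]$), which the computation above handles automatically.
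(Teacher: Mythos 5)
Your proposal is correct and follows essentially the same route as the paper: the same Jensen--Fubini argument via $\mathrm{(K1)}$ for the $L^q(w)\to L^q(w)$ contraction, the same H\"older-plus-(2.10) bound for $L^q(w)\to L^\infty$, and the same elementary splitting $|S(t)\varphi w|^r=|S(t)\varphi w|^{r-q}\cdot|S(t)\varphi w|^q$ to interpolate, with the identical observation that the resulting constant $C^{(r-q)/r}$ stays bounded uniformly in $q\le r$.
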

\begin{proof}
Fix $t>0.$ Using the H\"{o}lder inequality and (2.10), we get  $$\|S(t)\varphi w\|_\infty\leq Ct^{-\frac{n+a}{2}\cdot\frac{1}{q}}\|\varphi\|_{L^q(w)}$$
for any $1\leq q\leq \infty.$ Here $C$ depends only on $n$ and $\alpha$. Furthermore, by $\mathrm{(K1)}$ we apply the Jensen inequality and the Fubini theorem to obtain
\begin{align*}
\|S(t)\varphi w\|_{L^q(w)}^q&=\int_{\rr^n}\left|\int_{\rr^n}\Gamma(x,y,t)\varphi(y)w(y)dy\right|^qw(x)dx\\
&\leq \int_{\rr^n}\left(\int_{\rr^n}\Gamma(x,y,t)|\varphi(y)|w(y)dy\right)^qw(x)dx\\
&\leq \int_{\rr^n}\left(\int_{\rr^n}\Gamma(x,y,t)|\varphi(y)|^qw(y)dy\right)w(x)dx\\
&=\int_{\rr^n}|\varphi(y)|^q\left(\int_{\rr^n}\Gamma(x,y,t)w(x)dx\right)w(y)dy=\|\varphi\|_{L^q(w)}^q.
\end{align*}
These implies that $$\|S(t)\varphi w\|_{L^r(w)}\leq \|S(t)\varphi w\|_\infty^{\frac{r-q}{r}}\|S(t)\v w\|^{\frac{q}{r}}_{L^q(w)}\leq C^{\frac{r-q}{r}}t^{-\frac{n+a}{2}\left(\frac{1}{q}-\frac{1}{r}\right)}\|\varphi\|_{L^q(w)}.$$
The constant $C^{\frac{r-q}{r}}$ is bounded by the constant depending only on $n$ and $\alpha$ for all $1\leq q\leq r\leq \infty.$ Thus we complete the proof of Lemma 2.3.
\end{proof}

Before proving $L^{q,\infty}(w)$--$L^{r,\infty}(w)$ estimate for $S(t)\varphi w$, we prepare the following lemma.
\begin{lemma}
Let $1<r\leq \infty$. Assume $\varphi\in L^{r,\infty}(w)$. Then there exists a positive constant $C_r$ depending only on $r$ such that
$$\|S(t)\v w\|_{L^{r,\infty}(w)}\leq C_r\|\varphi\|_{L^{r,\infty}(w)}, \quad t>0.\eqno(2.18)$$
The constant $C_r$ is bounded in $1+\varepsilon< r<\infty$ for any fixed $\varepsilon>0$ and $C_r\to \infty $ as $r\to 1.$
\end{lemma}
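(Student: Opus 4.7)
The plan is to interpolate between two elementary endpoint bounds, both of which come directly from the Markov property $\mathrm{(K1)}$ of $\Gamma$. At the $L^\infty$ endpoint, nonnegativity of $\Gamma$ together with $\int_{\rr^n}\Gamma(x,y,t)w(y)\,dy=1$ gives the pointwise bound $|[S(t)\varphi w](x)|\leq\|\varphi\|_\infty$, which already handles the case $r=\infty$ with $C_\infty=1$. At the $L^1(w)$ endpoint, Fubini together with the symmetric identity $\int_{\rr^n}\Gamma(x,y,t)w(x)\,dx=1$ yields $\|S(t)\varphi w\|_{L^1(w)}\leq\|\varphi\|_{L^1(w)}$ (this is essentially the first computation in the proof of Lemma 2.3). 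Both endpoint constants equal $1$ and are independent of $t$, which is what allows the final estimate to be uniform in time.

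For $1<r<\infty$, I would run Marcinkiewicz's truncation argument directly at the level of distribution functions. Fix $s>0$ and split $\varphi=\varphi_1+\varphi_2$, where $\varphi_1=\varphi\chi_{\{|\varphi|>s/2\}}$. Since $\|\varphi_2\|_\infty\leq s/2$, the $L^\infty$ endpoint bound forces $\|S(t)\varphi_2 w\|_\infty\leq s/2$, so $\{|S(t)\varphi w|>s\}\subset\{|S(t)\varphi_1 w|>s/2\}$. Chebyshev's inequality combined with the $L^1(w)$ endpoint bound then gives $\mu_{S(t)\varphi w}(s)\leq(2/s)\|\varphi_1\|_{L^1(w)}$. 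Next, the layer-cake representation of $\|\varphi_1\|_{L^1(w)}$ together with the weak-type estimate $\mu_\varphi(\lambda)\leq\lambda^{-r}\|\varphi\|_{L^{r,\infty}(w)}^r$ from $(1.9)$ produces a bound of the form $\|\varphi_1\|_{L^1(w)}\leq\frac{r}{r-1}(s/2)^{1-r}\|\varphi\|_{L^{r,\infty}(w)}^r$. Collecting everything gives $s\,\mu_{S(t)\varphi w}(s)^{1/r}\leq 2(r/(r-1))^{1/r}\|\varphi\|_{L^{r,\infty}(w)}$ which, again by $(1.9)$, is exactly $(2.18)$ with $C_r=2(r/(r-1))^{1/r}$.

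The explicit form of $C_r$ immediately confirms that $C_r$ stays bounded on any set $\{r:1+\varepsilon<r<\infty\}$ and diverges as $r\to 1$, as required by the statement. I do not anticipate any serious obstacle: the two endpoint bounds are essentially symmetric consequences of the conservation property $\mathrm{(K1)}$, and the Marcinkiewicz truncation is standard. The only mild bookkeeping is in the layer-cake computation of $\|\varphi_1\|_{L^1(w)}$, where one must correctly split the $\lambda$-integration at $\lambda=s/2$ and use $\mu_{\varphi_1}(\lambda)=\mu_\varphi(\max(\lambda,s/2))$ to absorb the contribution from $\lambda<s/2$; this is what produces the factor $r/(r-1)$ responsible for the blow-up of $C_r$ at $r=1$.
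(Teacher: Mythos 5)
Your proposal is correct and follows essentially the same route as the paper: the same truncation of $\varphi$ at half the level $s$, the same two endpoint bounds from $\mathrm{(K1)}$ (the pointwise $L^\infty$ bound on the small part and the $L^1(w)$ contraction plus Chebyshev on the large part), the same layer-cake estimate producing the factor $r/(r-1)$, and the identical constant $C_r=2(r/(r-1))^{1/r}$. The only difference is cosmetic labelling of which truncated piece is called $\varphi_1$.
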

\begin{proof}
In case of $r=\infty,$ since $L^{\infty,\infty}(w)=L^\infty$, by $\mathrm{(K1)}$, (2.18) holds true with the constant $C_r=1.$ We consider the case where $1<r<\infty.$
Let $M$ be a positive real number to be chosen later. Set $$M_-:=\{x:|\varphi|\leq M\}\quad \text{and}\quad M_+:=\{x:|\varphi|>M\}.$$
Furthermore, we define $$\varphi_1:=\v\chi_{ M_-},\quad \v_2:=\v\chi_{ M_+},$$
where $\chi_E$ is the characteristic function of $E$. By the fundamental properties of $\mu_f(\lambda)$ we have
\begin{equation*}\label{1}
\mu_{\varphi_1}(\lambda)=\left \{
   \begin{array}{ll}
0\qquad \qquad~~ &\text{if}~\lambda \geq M,\\\tag{2.19}
\mu_{\v}(\lambda)-\mu_{\v}(M)&\text{if}~\lambda<M,
 \end{array}
 \right.
\end{equation*}
and
\begin{equation*}\label{1}
\mu_{\varphi_2}(\lambda)=\left \{
   \begin{array}{ll}
\mu_{\v}(\lambda)\qquad \qquad~~ &\text{if}~\lambda \geq M,\\\tag{2.20}
\mu_{\v}(M)&\text{if}~\lambda<M.
 \end{array}
 \right.
\end{equation*}
Since $\{x: S(t)\v w>\lambda\}\subset \{x: S(t)\v_1w>\lambda/2\}\cup \{x: S(t)\v_2w>\lambda/2\}$ for $\lambda >0,$ we get
$$\mu_{S(t)\v w}(\lambda)\leq \mu_{S(t)\v_1w}(\lambda/2)+\mu_{S(t)\v_2w}(\lambda/2)\eqno(2.21)$$
for all $t>0.$
Then, by (1.9) and $(2.20)$ we obtain
\begin{align*}
\|\v_2\|_{L^1(w)}&=\int^\infty_0\mu_{\v_2}(\lambda)d\lambda\\
&=\int^M_0\mu_{\v}(M)d\lambda+\int^\infty_M\mu_{\v}(\lambda)d\lambda\\
&\leq M\mu_{\v}(M)+\int^\infty_M\lambda^{-r}\|\v\|^r_{L^{r,\infty}(w)}d\lambda\\
&\leq M^{1-r}\|\v\|^r_{L^{r,\infty}(w)}+\frac{1}{r-1}M^{1-r}\|\v\|^r_{L^{r,\infty}(w)}\\
&=\frac{r}{r-1}M^{1-r}\|\v\|^r_{L^{r,\infty}(w)}.
\end{align*}
By $\mathrm{(K1)}$ we apply the Fubini theorem to deduce
$$\|S(t)\v_2 w\|_{L^1(w)}\leq \|\v_2\|_{L^1(w)}\leq \frac{r}{r-1}M^{1-r}\|\v\|^r_{L^{r,\infty}(w)}\eqno(2.22)$$
for all $t>0$. On the other hand, using the H\"{o}lder inequality and $\mathrm{(K1)}$, we have that
$$\left|[S(t)\v_1 w](x)\right|\leq \|\Gamma(x,\cdot,t)\|_{L^1(w)}\|\v_1\|_\infty\leq M\eqno(2.23)$$
for all $x\in \rr^n$  and $t>0.$

Let $\nu>0$ and fix it. Taking the constant $M$ as $M=\nu/2$, by (2.23) we get $$\mu_{S(t)\v_1w}(\nu/2)=0,\quad t>0.$$
Then, applying the Chebyshev inequality with  (2.21) and (2.22) $$\mu_{S(t)\v w}(\nu)\leq \mu_{S(t)\v_2w}(\nu/2)\leq \frac{2}{\nu}\|S(t)\v_2 w\|_{L^1(w)}\leq \dfrac{2^rr}{r-1}v^{-r}\|\v\|_{L^{r,\infty}(w)}^r$$
for all $t>0$. Since the above inequality holds all $\nu>0$, by (1.9) we obtain (2.18) with $C_r=2(r/(r-1))^{\frac{1}{r}}$, and the constant $C_r$ is bounded as $r\to \infty$. Thus the proof of Lemma 2.4 is complete.
\end{proof}

Next we give  $L^{q,\infty}(w)$--$L^{r,\infty}(w)$ estimate for $S(t)\varphi w$.
\begin{lemma}
For any $\v\in L^{q,\infty}(w)$ with $1<q\leq \infty$ and $q\leq r\leq \infty$, it holds that
$$\|S(t)\v w\|_{L^{r,\infty}(w)}\leq c_2t^{-\frac{n+a}{2}\left(\frac{1}{q}-\frac{1}{r}\right)}\|\v\|_{L^{q,\infty}(w)},\quad t>0.$$
Here $c_2$ can be taken so that it depends only on $q$, $n$ and $\alpha\in\{a,b\}$. In particular, the constant $c_2$ is bounded in $q\in (1+\varepsilon,\infty)$ for any fixed $\varepsilon>0$ and $c_2\to \infty$ as $q\to 1.$
\end{lemma}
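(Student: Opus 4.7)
The plan is to reduce to Lemmas 2.2 and 2.3 through a truncation argument, splitting into three cases according to whether $q$ or $r$ equals $\infty$.

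The case $q=\infty$ is immediate from $\mathrm{(K1)}$ since $L^{\infty,\infty}(w)=L^\infty$. For $1<q<\infty$ and $r=\infty$, I would combine H\"older's inequality for Lorentz spaces $(1.11)$ with the $L^{r,1}$-bound on $\Gamma(x,\cdot,t)$ provided by Lemma 2.2. Letting $q'$ be the H\"older conjugate of $q$, this yields
$$|[S(t)\varphi w](x)|\leq \|\Gamma(x,\cdot,t)\|_{L^{q',1}(w)}\|\varphi\|_{L^{q,\infty}(w)}\leq Ct^{-\frac{n+\alpha}{2q}}\|\varphi\|_{L^{q,\infty}(w)},$$
which is exactly the $L^{q,\infty}(w)\to L^\infty$ estimate with the asserted time decay.

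The main work lies in the range $1<q\leq r<\infty$, which I plan to treat by mimicking the truncation of Lemma 2.4. Fix $\lambda>0$, set $M=\lambda/2$, and split $\varphi=\varphi_1+\varphi_2$ with $\varphi_1:=\varphi\chi_{\{|\varphi|\leq M\}}$. Then $\|\varphi_1\|_\infty\leq M$ together with $\mathrm{(K1)}$ forces $\|S(t)\varphi_1 w\|_\infty\leq\lambda/2$, so $\mu_{S(t)\varphi_1 w}(\lambda/2)=0$ and by sublinearity the task reduces to bounding $\mu_{S(t)\varphi_2 w}(\lambda/2)$. The layer-cake computation used in Lemma 2.4 controls $\|\varphi_2\|_{L^1(w)}$ by $\frac{q}{q-1}M^{1-q}\|\varphi\|_{L^{q,\infty}(w)}^q$. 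The key move is to apply Lemma 2.3 with input $L^1(w)$ and output $L^s(w)$ for the specific choice $s:=r/q\in[1,\infty)$, followed by Chebyshev's inequality, to obtain
$$\mu_{S(t)\varphi w}(\lambda)\leq (2/\lambda)^s\bigl(c_1 t^{-\frac{n+\alpha}{2}(1-1/s)}\|\varphi_2\|_{L^1(w)}\bigr)^s.$$
Substituting $M=\lambda/2$ makes the powers of $\lambda$ collapse to $\lambda^{-sq}=\lambda^{-r}$, and an application of $(1.9)$ delivers the claim.

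The main obstacle is calibrating the exponents so that the $\lambda$-dependence cancels cleanly: the choice $s=r/q$ is forced by this requirement, while the constraint $s\geq 1$ is exactly the hypothesis $r\geq q$. Tracking constants shows that $c_2$ absorbs a factor $\bigl(\frac{q}{q-1}\bigr)^{1/q}$ from the estimate on $\|\varphi_2\|_{L^1(w)}$, uniformly bounded on $q\in[1+\varepsilon,\infty)$ and diverging as $q\to 1^+$, in agreement with the stated behavior of $c_2$.
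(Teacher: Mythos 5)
Your argument is correct, and for the main range $1<q\leq r<\infty$ it takes a genuinely different route from the paper. The paper first establishes two endpoint estimates for the semigroup --- the smoothing bound $\|S(t)\varphi w\|_\infty\leq Ct^{-\frac{n+\alpha}{2q}}\|\varphi\|_{L^{q,\infty}(w)}$ via H\"older's inequality $(1.11)$ and Lemma 2.2 (which is also your $r=\infty$ case), and the boundedness $\|S(t)\varphi w\|_{L^{q,\infty}(w)}\leq C_q\|\varphi\|_{L^{q,\infty}(w)}$ of Lemma 2.4 --- and then concludes for all intermediate $r$ by applying the Lorentz interpolation inequality $(1.10)$ to the output function $S(t)\varphi w$, with $\theta=1-q/r$. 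You instead run a single weak-type estimate by hand: the same truncation at height $M=\lambda/2$ and the same layer-cake bound $\|\varphi_2\|_{L^1(w)}\leq\frac{q}{q-1}M^{1-q}\|\varphi\|^q_{L^{q,\infty}(w)}$, but you route the large part through the strong-type $L^1(w)\to L^{r/q}(w)$ smoothing of Lemma 2.3 and Chebyshev rather than through mass conservation alone; the exponent bookkeeping ($\lambda^{-s}\cdot\lambda^{(1-q)s}=\lambda^{-r}$ and $(s-1)/r=1/q-1/r$) checks out, and your constant $2c_1^{1/q}(q/(q-1))^{1/q}$ has exactly the stated dependence and blow-up as $q\to1^+$. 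What each approach buys: yours is more self-contained for finite $r$ --- it bypasses Lemma 2.2, the $L^{r,1}$ kernel bound, and the interpolation inequality $(1.10)$ entirely in that range, essentially redoing Marcinkiewicz interpolation with the time decay built in --- whereas the paper's factorization through the two reusable endpoint bounds $(2.24)$ and $(2.18)$ treats all $r\in[q,\infty]$ uniformly in one line and keeps those endpoint estimates available for later use (e.g.\ $(4.6)$). Both yield the same statement with the same constant behavior.
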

\begin{proof}
For any $1<q\leq \infty,$ it follows from (1.11) and (2.11) that
\begin{align*}
\|S(t)\v w\|_\infty&\leq \sup_{x\in \rr^n}\|\Gamma(x,\cdot,t)\varphi\|_{L^1(w)}\\\tag{2.24}
&\leq \|\Gamma(x,\cdot,t)\|_{L^{\frac{q}{q-1},1}(w)}\|\v\|_{L^{q,\infty}(w)}\\
&\leq Ct^{-\frac{n+a}{2}\cdot\frac{1}{q}}\|\v\|_{L^{q,\infty}(w)}
\end{align*}
for all $t>0$, where $C$ depends only on $q, n$ and $\alpha$. Hence, Combining (1.10), (2.18) and $(2.24)$, we deduce
$$\|S(t)\v w\|_{L^{r,\infty}(w)}\leq \|S(t)\v w\|_{L^{q,\infty}(w)}^{1-\theta}\|S(t)\v w\|_\infty^\theta\leq C^{1-\theta}_qC^\theta t^{-\frac{n+a}{2}\cdot\frac{\theta}{q}}\|\v\|_{L^{q,\infty}(w)},$$
where $\theta=1-q/r\in [0,1]$. Since $\theta/q=1/q-1/r$, we complete the proof of Lemma 2.5.
\end{proof}

Furthermore, by (2.8) and $(2.9)$ we have the following lemma.
\begin{lemma}
Assume same conditions as in Lemma $2.2.$ Let $\varphi\in L^\infty$ be a measurable function such that $\v\geq 0 $ in $\rr^n$. Then there exists a positive constant $C$ depending only on $n$ and $\alpha\in\{a,b\}$ such that $$[S(t)\v w](x)\geq Ct^{-\frac{n+\alpha}{2}}\int _{|y|\leq \sqrt{t}}\v (y)w(y)dy$$
for $|x|\leq \sqrt{t}$ and $t>0$.
\end{lemma}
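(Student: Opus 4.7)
The plan is to derive the desired pointwise lower bound on $[S(t)\varphi w](x)$ by restricting the defining integral (2.17) to the ball $\{|y|\leq\sqrt{t}\}$ and then controlling $\Gamma(x,y,t)$ from below uniformly on the product set $\{|x|\leq\sqrt{t}\}\times\{|y|\leq\sqrt{t}\}$. The key input is the Gaussian lower bound in $\mathrm{(K3)}$ combined with the upper estimates on $w(B(\cdot,\sqrt{t}))$ obtained in Lemma 2.1.

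First, I would observe that if $|x|\leq\sqrt{t}$ and $|y|\leq\sqrt{t}$, then $|x-y|\leq 2\sqrt{t}$, so the exponential factor in $\mathrm{(K3)}$ is bounded below by $\exp(-4/c_*)$, a constant depending only on $n$ and $\alpha$. What remains is to bound $\sqrt{w(B(x,\sqrt{t}))\,w(B(y,\sqrt{t}))}$ from above by a constant multiple of $t^{(n+\alpha)/2}$ whenever $|x|,|y|\leq\sqrt{t}$.

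For case $\mathrm{(A)}$, since $|x_1|\leq|x|\leq\sqrt{t}$, we are in the regime $r=\sqrt{t}\geq|x_1|$ of (2.2), which yields $w(B(x,\sqrt{t}))\leq C'\,t^{(n+a)/2}$, and analogously for $y$. For case $\mathrm{(B)}$, the same logic applied to (2.4) with $|x|\leq\sqrt{t}$ gives $w(B(x,\sqrt{t}))\leq C'\,t^{(n+b)/2}$. In both cases, plugging into $\mathrm{(K3)}$ produces a uniform lower bound
\[
\Gamma(x,y,t)\geq C\,t^{-\frac{n+\alpha}{2}},\qquad |x|\leq\sqrt{t},\;|y|\leq\sqrt{t},
\]
with $C$ depending only on $n$ and $\alpha$. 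Alternatively, one could read this off directly from the lower estimates in (2.8) and (2.9), since $\min\{t^{-n/4}|x_1|^{-a/2},t^{-(n+a)/4}\}=t^{-(n+a)/4}$ precisely when $|x_1|\leq\sqrt{t}$, and similarly for the other factor.

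With this uniform lower bound in hand, the conclusion follows immediately: because $\varphi\geq 0$,
\[
[S(t)\varphi w](x)=\int_{\rr^n}\Gamma(x,y,t)\varphi(y)w(y)\,dy\geq\int_{|y|\leq\sqrt{t}}\Gamma(x,y,t)\varphi(y)w(y)\,dy\geq C\,t^{-\frac{n+\alpha}{2}}\int_{|y|\leq\sqrt{t}}\varphi(y)w(y)\,dy
\]
for every $|x|\leq\sqrt{t}$ and $t>0$. There is no real obstacle here; the only point to be careful about is verifying that the localization $|x_1|\leq\sqrt{t}$ (respectively $|x|\leq\sqrt{t}$) genuinely puts us in the branch of Lemma 2.1 that gives the clean bound $r^{n+\alpha}$, rather than the mixed bound $r^n|x_1|^a$, so that both weight volumes contribute the correct scaling $t^{(n+\alpha)/2}$.
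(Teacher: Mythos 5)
Your proof is correct and follows essentially the same route as the paper: both obtain the uniform lower bound $\Gamma(x,y,t)\geq Ct^{-\frac{n+\alpha}{2}}$ on $\{|x|\leq\sqrt{t}\}\times\{|y|\leq\sqrt{t}\}$ from the Gaussian lower estimate together with the weight-volume bounds (the paper simply cites $(2.8)$--$(2.9)$, which package exactly the combination of $\mathrm{(K3)}$ and Lemma 2.1 that you spell out), and then restrict the integral in $(2.17)$ to the ball. Your extra care in checking which branch of Lemma 2.1 applies when $|x_1|\leq\sqrt{t}$ (resp.\ $|x|\leq\sqrt{t}$) is exactly the point the paper leaves implicit.
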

\begin{proof}
Since $|x-y|^2\leq 2(|x|^2+|y|^2)$, by (2.8) and (2.9) we can find a positive constant $C$ depending only on $n$ and $\alpha$ such that $$\Gamma(x,y,t)\geq Ct^{-\frac{n+\alpha}{2}}$$
for $|x|, |y|\leq \sqrt{t}$ and $t>0.$

Then, by (2.17) we get
$$[S(t)\v w](x)\geq \int_{|y|\leq \sqrt{t}}\Gamma(x,y,t)\v(y)w(y)dy\geq Ct^{-\frac{n+\alpha}{2}}\int _{|y|\leq \sqrt{t}}\v(y)w(y)dy$$
for all $|x|\leq \sqrt{t}$ and $t>0.$ Thus we complete the proof of Lemma 2.6.
\end{proof}
\section{Proof of Theorem 1.1}
In what follows,  we assume either $\mathrm{(A)}$ or $\mathrm{(B)}$. Let $\alpha$ be such that $\alpha=a$ for the case $\mathrm{(A)}$
and $\alpha=b$ for the case $\mathrm{(B)}$. In this section, we prove Theorem 1.1, which means that problem (1.3) has no positive global solutions in the case $1<p\leq p_*(\alpha)$. The proof of Theorem 1.1 is based on the arguments of \cite[Theorem 5]{WF1} and \cite[Theorem 1]{WF2} (see also \cite[Theorem 1.1]{FI} and \cite[Lemma 3.1]{FU}).

Before proving Theorem 1.1, we first give the following lemma.
\begin{lemma}
Let $u$ be a solution of $(1.3)$ in $\rr^n\times (0,T)$ with $0<T\leq \infty.$ Then there exists a constant $C^*$ depending only on $p$ such that
$$t^{\frac{1}{p-1}}\|S(t)u_0w\|_\infty\leq C^*\eqno(3.1)$$
for any $t\in [0,T).$
\end{lemma}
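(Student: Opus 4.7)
The plan is to combine iterated substitution into the integral identity (1.4) with the probabilistic tools afforded by (K1) and (K2). Starting from the trivial bound $u(x,t) \geq [S(t) u_0 w](x)$ (obtained by dropping the nonlinear term in (1.4), using $u \geq 0$), I will inductively prove a family of pointwise estimates
$$u(x,t) \geq c_k\, t^{a_k}\, [S(t) u_0 w](x)^{p^k},\qquad k = 0,1,2,\dots,$$
for explicit sequences $a_k, c_k$ depending only on $p$. Since $u \in L^\infty(0,T; L^\infty(\rn))$, this produces $\|S(t) u_0 w\|_\infty \leq (K/c_k)^{1/p^k}\, t^{-a_k/p^k}$ for every $k$, where $K := \|u\|_{L^\infty(0,T;L^\infty)}$, and letting $k \to \infty$ will deliver the decay rate $t^{-1/(p-1)}$ with $C^*$ depending only on $p$.

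For the inductive step I substitute the level-$k$ bound into the Duhamel term of (1.4) to obtain
$$u(x,t) \geq c_k^p \int_0^t s^{p a_k} \int_{\rn} \Gamma(x,y,t-s)\, [S(s) u_0 w](y)^{p^{k+1}}\, w(y)\, dy\, ds.$$
Two ingredients handle the inner integral. First, by (K1) the measure $\Gamma(x,y,t-s)\, w(y)\, dy$ is a probability measure in $y$, so Jensen's inequality applied to the convex function $z \mapsto z^{p^{k+1}}$ bounds it below by $\bigl(\int \Gamma(x,y,t-s)\,[S(s) u_0 w](y)\, w(y)\, dy\bigr)^{p^{k+1}}$. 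Second, (K2) identifies this integral with $[S(t) u_0 w](x)$, yielding a bound independent of $s$. The outer $s$-integration then produces the factor $t^{p a_k + 1}/(p a_k + 1)$, so the iteration closes with
$$a_{k+1} = p a_k + 1,\qquad c_{k+1} = \frac{c_k^p}{p a_k + 1}.$$
Starting from $(a_0, c_0) = (0, 1)$ one finds explicitly $a_k = (p^k - 1)/(p-1)$.

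Taking the essential supremum in $x$ gives, for every $k$ and every $t \in (0,T)$,
$$t^{a_k/p^k} \|S(t) u_0 w\|_\infty \leq (K / c_k)^{1/p^k}.$$
Since $a_k/p^k = (1 - p^{-k})/(p-1) \to 1/(p-1)$ and $K^{1/p^k} \to 1$, the claim reduces to showing that $c_k^{-1/p^k}$ has a finite limit depending only on $p$. Taking logarithms in the recursion and telescoping yields $p^{-k}\log c_k = -\sum_{j=1}^{k} p^{-j}\log a_j$, and since $a_j \sim p^j/(p-1)$ makes $\sum_j p^{-j}\log a_j$ absolutely convergent, $p^{-k}\log c_k$ tends to a finite limit determined solely by $p$. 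Passing to the limit $k \to \infty$ gives the stated inequality with $C^* := \lim_k c_k^{-1/p^k}$. The main subtlety is that the final constant must be independent of $K$ (and hence of $u_0$); this is delivered by the iteration because $K$ enters each iterate only through the factor $K^{1/p^k}$, whose tendency to $1$ as $k \to \infty$ is exactly what absorbs the a priori dependence on the size of the solution.
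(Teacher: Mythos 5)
Your proposal is correct and follows essentially the same route as the paper's proof: iterate the Duhamel identity (1.4), apply Jensen's inequality with the probability measure from (K1) together with the semigroup property (K2) to get $u(x,t)\geq A_k\,t^{(p^k-1)/(p-1)}([S(t)u_0w](x))^{p^k}$, and verify that $A_k^{-p^{-k}}$ converges via the same logarithmic series as in (3.7). The only cosmetic difference is that you take essential suprema and let $K^{1/p^k}\to 1$, whereas the paper argues pointwise with $u(x,t)^{p^{-k}}\to 1$; both yield the same $C^*$ depending only on $p$.
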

\begin{proof}
Since it follows from (2.8) and (2.9) that the fundamental solution $\Gamma$ is positive for $x, y\in \rr^n$ and $t>0,$ by (1.4) and $(2.17)$ we have
$$[S(t)u_0 w](x)\leq u(x,t)<\infty\eqno(3.2)$$
for almost all $x\in \rr^n$ and all $t\in (0,T).$ This together with (1.4) again implies $$u(x,t)\geq \int^t_0\int_{\rr^n}\Gamma(x,y,t-s)([S(s)u_0 w](y))^pw(y)dyds\eqno(3.3)$$
for almost all $x\in \rr^n$ and all $t\in (0,T).$ Then, by $\mathrm{(K1)}$ and $\mathrm{(K2)}$ we apply the Jensen inequality to (3.3), we deduce
\begin{align*}
u(x,t)&\geq \int^t_0\left(\int_{\rr^n}\Gamma(x,y,t-s)[S(s)u_0 w](y)w(y)dy\right)^pds\\\tag{3.4}
&=\int^t_0\left(\int_{\rr^n}\Gamma(x,y,t-s)\int_{\rr^n}\Gamma(y,z,s)u_0(z)w(z)dzw(y)dy\right)^pds\\
&=\int^t_0\left[\int_{\rr^n}\left(\int_{\rr^n}\Gamma(x,y,t-s)\Gamma(y,z,s)w(y)dy\right)u_0(z)w(z)dz\right]^pds\\
&=\int^t_0\left(\int_{\rr^n}\Gamma(x,z,t)u_0(z)w(z)dz\right)^pds=t([S(t)u_0w](x))^p
\end{align*}
for almost all $x\in \rr^n$ and all $t\in (0,T).$ We repeat the above argument with (3.2) replaced by (3.4), and get
\begin{align*}
u(x,t)&\geq \int^t_0\int_{\rr^n}\Gamma(x,y,t-s)\left(s([S(s)u_0w](y))^p\right)^pw(y)dyds\\
&\geq \int^t_0s^p\left(\int_{\rr^n} \Gamma(x,y,t-s)[S(s)u_0w](y)w(y)dy\right)^{p^2}ds\\
&=\dfrac{1}{p+1}~t^{p+1}([S(t)u_0w](x))^{p^2}
\end{align*}
for almost all $x\in \rr^n$ and all $t\in (0,T).$ Repeating the above argument, for any $k=2,3,\ldots,$
it holds that $$u(x,t)\geq A_k~t^{\frac{p^k-1}{p-1}}([S(t)u_0w](x))^{p^k}\eqno(3.5)$$
for almost all $x\in \rr^n$ and all $t\in (0,T),$ where
\begin{align*}
A_k:=&\left(\dfrac{1}{p+1}\right)^{p^{k-2}}\left(\dfrac{1}{(p+1)p+1}\right)^{p^{k-3}}\cdots\left(\dfrac{1}{(1+p+\cdots+p^{k-2})p+1}\right)\\
=&\prod^{k-1}_{j=1}\left(\dfrac{p-1}{p^{j+1}-1}\right)^{p^{k-j-1}}.
\end{align*}
Hence, by (3.5) we get $$t^{\frac{1}{p-1}(1-p^{-k})}[S(t)u_0w](x)\leq u(x,t)^{p^{-k}}\left(\prod^{k-1}_{j=1}\left(\dfrac{p-1}{p^{j+1}-1}\right)^{p^{k-j-1}}\right)^{-p^{-k}}\eqno(3.6)$$
for almost all $x\in \rr^n$ and all $t\in (0,T).$
In addition, we have
\begin{align*}
\mathrm{log}\left(\prod^\infty_{j=1}\left(\dfrac{p^{j+1}-1}{p-1}\right)^{p^{-j-1}}\right)&=\sum^{\infty}_{j=1}p^{-j-1}\mathrm{log}\left(\dfrac{p^{j+1}-1}{p-1}\right)\\\tag{3.7}
&\leq \sum^\infty_{j=1}p^{-j-1}\mathrm{log}\left((j+1)p^j\right)\\
&\leq (1+\mathrm{log}p)\sum^\infty_{j=2}jp^{-j}<\infty.
\end{align*}
Then, by (3.6) and (3.7) we can find a constant $C^*$ depending only on $p$ such that $$t^{\frac{1}{p-1}}[S(t)u_0w](x)\leq C^*<\infty$$
for almost all $x\in \rr^n$ and all $t\in (0,T).$ This implies (3.1), and Lemma 3.1 follows.
\end{proof}

Now we prove Theorem 1.1 by using Lemma 3.1.\\
\\
{\bf{Proof of Theorem 1.1}}\quad The proof is by contradiction. Let $u$ be a positive global-in-time solution of (1.3). Since $u(\cdot, 1)$ is a positive measurable function in $\rr^n$, we can find a non-trivial measurable function $U_1\in L^\infty$ such that $\supp U_1\subset B(0,1)$ and $0\leq U_1(x)\leq u(x,1)$ for almost all $x\in \rr^n$. Then, using Lemma 2.6, we have $$[S(t)U_1 w](x)\geq CM t^{-\frac{n+\alpha}{2}},\quad M:=\int_{B(0,1)}U_1(x)w(x)dx,\eqno(3.8)$$
for all $|x|\leq \sqrt{t}$ and $t\geq 1.$ Furthermore, by (1.4) and  (2.17) we see that $$u(x,t+1)\geq [S(t)u(1)w](x)\geq [S(t)U_1w](x)\eqno(3.9)$$
for almost all $x\in \rr^n$ and all $t>0.$

We first consider the case $1<p<p_*(\alpha)$. By (3.8) and (3.9) we get $$[S(t)u(1)w](x)\geq CM t^{-\frac{n+\alpha}{2}}\eqno(3.10)$$
for all $|x|\leq \sqrt{t}$ and $t\geq 1$. It follows from  $1<p<p_*(\alpha)$ with (3.10) that $$t^{\frac{1}{p-1}}\|S(t)u(1)w\|_\infty\to\infty\quad \text{as}\quad t\to \infty,$$
which contradicts (3.1). This means that problem (1.3) has no positive global-in-time positive solutions.

Next we consider the case $p=p_*(\alpha)$. Since $t+1-s\leq t$ and $s\leq t+1-s$ for $1\leq s\leq t/2$, by (2.8) and (2.9) we have $$\int_{|x|\leq \sqrt{t}}\Gamma(x,y,t)w(x)dx\geq Ct^{-\frac{n+\alpha}{2}}\int_{|x|\leq \sqrt{t}}w(x)dx\geq C\eqno(3.11)$$
for all $|y|\leq \sqrt{t}$. It follows from (1.4), (3.9), (3.10) and (3.11) that
\begin{align*}
&\int_{|x|\leq \sqrt{t}}u(x,t+1)w(x)dx\\
&\geq \int_{|x|\leq \sqrt{t}}\int^{\frac{t}{2}}_1\int_{|y|\leq \sqrt{t+1-s}}\Gamma(x,y,t+1-s)u(y,s)^pw(y)dydsw(x)dx\\\tag{3.12}
&\geq \int^{\frac{t}{2}}_1\int_{|y|\leq \sqrt{t+1-s}}\left(\int_{|x|\leq \sqrt{t+1-s}}\Gamma(x,y,t+1-s)w(x)dx\right)u(y,s)^pw(y)dyds\\
&\geq C\int^{\frac{t}{2}}_1\int_{|y|\leq \sqrt{t+1-s}}u(y,s)^{p-1}u(y,s)w(y)dyds\\
&\geq CM^p\int^{\frac{t}{2}}_1\left(s^{-\frac{n+\alpha}{2}}\right)^{p-1}\left(\int_{|y|\leq \sqrt{s}}s^{-\frac{n+\alpha}{2}}w(y)dy\right)ds\\
&\geq CM^p\int^{\frac{t}{2}}_1s^{-\frac{n+\alpha}{2}(p-1)}ds\geq CM^p\mathrm{log}t,\quad t>3.\\
\end{align*}
Let $m$ be a sufficiently large positive constant. By (3.12) we can find $T>0$ such that the function $U_2$ defined by $U_2:=u(\cdot,T)\in L^\infty$ satisfies
$$\int_{|x|\leq \sqrt{T}}U_2(x)w(x)dx\geq m.$$
Similarly to (3.8) and (3.9), we have $$u(x,t+T)\geq [S(t)U_2w](x)\geq Cm t^{-\frac{n+\alpha}{2}}$$
for almost all $x\in \rr^n$ and all $t>0$. This implies that $$t^{\frac{n+\alpha}{2}}\|S(t)U_2w\|_{\infty}\geq Cm,\quad t>1.\eqno(3.13)$$
Let $v$ be a solution of (1.3) with initial $U_2$. Then, since $u$ is a positive global-in-time solution of (1.3), $v$ is also a global-in-time solution of (1.3). Thus we can apply Lemma 3.1 to the solution $v$, and obtain (3.1) replacing $u_0$ with $U_2$. By the arbitrariness of $m$, this contradicts $(3.13)$ and we see that problem (1.3) has no positive global-in-time solutions for the case $p=p_*(\alpha)$. Hence the proof of Theorem 1.1 is complete. \qed

\section{Proof of Theorem 1.2}
In this section, we prove the uniqueness and local existence of solutions of (1.3) and then obtain Theorem 1.2. We first give the uniqueness of solutions of (1.3); see also  \cite[Lemma 3.1]{FI1}.
\begin{lemma}
Let $i=1,2$, $\tau>0$, and $u_i$ be a solution of $(1.3)$ in $\rr^n\times (0,\tau)$ with $u_0=u_{0,i}\in L^\infty.$ Then, for any $\sigma\in (0,\tau)$, there exists a constant $C$ such that $$\sup_{0<t\leq \sigma}\|u_1(t)-u_2(t)\|_\infty\leq C\|u_{0,1}-u_{0,2}\|_\infty,\eqno(4.1)$$
where the constant $C$ depends on $\|u_1\|_{L^\infty(0,\sigma : L^\infty)}$ and $\|u_2\|_{L^\infty(0,\sigma : L^\infty)}$.
\end{lemma}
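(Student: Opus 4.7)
The plan is to reduce \eqref{4.1} to a Gr\"onwall inequality applied to the difference $u_1-u_2$ written in its mild form. First, from Definition 1.1 and the notation (2.17), each $u_i$ satisfies
$$u_i(x,t) \;=\; [S(t)u_{0,i}w](x) \;+\; \int_0^t [S(t-s)\,u_i(s)^p\,w](x)\,ds,$$
so subtracting gives
$$u_1(x,t)-u_2(x,t) \;=\; [S(t)(u_{0,1}-u_{0,2})w](x) \;+\; \int_0^t \bigl[S(t-s)\bigl(u_1(s)^p-u_2(s)^p\bigr)w\bigr](x)\,ds.$$

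The key tool is the $L^\infty$ contraction $\|S(t)\varphi w\|_\infty \leq \|\varphi\|_\infty$ for every $\varphi\in L^\infty$, which follows immediately from the positivity of $\Gamma$ together with the mass-conservation property $\mathrm{(K1)}$, namely $\int_{\rr^n}\Gamma(x,y,t)w(y)\,dy=1$. (This is the replacement, in our non-translation-invariant setting, for the usual $L^\infty$ bound on the heat semigroup, and it is the only nontrivial ingredient we need about $\Gamma$.) Apply this to the two terms above, and use the elementary pointwise bound
$$\bigl|u_1(y,s)^p - u_2(y,s)^p\bigr| \;\leq\; p\,M^{p-1}\,\bigl|u_1(y,s)-u_2(y,s)\bigr|, \qquad M:=\max\bigl(\|u_1\|_{L^\infty(0,\sigma;L^\infty)},\,\|u_2\|_{L^\infty(0,\sigma;L^\infty)}\bigr),$$
obtained from the mean value theorem applied to $z\mapsto z^p$ on $[0,M]$ (both $u_1,u_2$ are nonnegative by definition of a solution).

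Combining these estimates yields, for every $t\in(0,\sigma]$,
$$\|u_1(t)-u_2(t)\|_\infty \;\leq\; \|u_{0,1}-u_{0,2}\|_\infty \;+\; pM^{p-1}\int_0^t \|u_1(s)-u_2(s)\|_\infty\,ds.$$
A standard Gr\"onwall argument then gives $\|u_1(t)-u_2(t)\|_\infty \leq e^{pM^{p-1}\,t}\|u_{0,1}-u_{0,2}\|_\infty$, which is precisely \eqref{4.1} with $C=e^{pM^{p-1}\sigma}$.

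There is no serious obstacle here: the only delicate point is to notice that the $L^\infty$-contractivity of $S(t)$ follows from $\mathrm{(K1)}$ and the positivity of $\Gamma$, and does not require any gradient bound or translation invariance (which, as Remark 1.4 explains, are not available in this setting). Everything else is a routine Duhamel-plus-Gr\"onwall computation, with the constant $C$ depending on $\|u_1\|_{L^\infty(0,\sigma;L^\infty)}$ and $\|u_2\|_{L^\infty(0,\sigma;L^\infty)}$ exactly through $M$.
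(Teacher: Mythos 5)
Your proof is correct and follows essentially the same route as the paper: write both solutions in mild form, subtract, use the $L^\infty$-contractivity of $S(t)$ coming from $\mathrm{(K1)}$ and the positivity of $\Gamma$, bound $|u_1^p-u_2^p|$ by $pM^{p-1}|u_1-u_2|$ via the mean value theorem, and close with a Gr\"onwall-type argument. The only cosmetic difference is that the paper replaces the explicit Gr\"onwall lemma by an iteration over time intervals of length $\varepsilon$ with $C_1\varepsilon\le 1/2$, which yields the same constant depending on $\sigma$, $p$ and the two $L^\infty(0,\sigma:L^\infty)$ norms.
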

\begin{remark}
Let $\tau>0$ and $u$ be a  solution of $(1.3)$ in $\rr^n\times (0,\tau].$ If $\|u\|_{L^\infty(0,\tau : L^\infty)}$ is bounded, then we can take a constant in $(4.1)$ uniformly with respect to $\sigma$. Hence, if $u$ is a global-in-time bounded solution of $(1.3)$, then, apply Lemma $4.1$, we see that $u$ is a unique solution of $(1.3)$.
\end{remark}
\begin{proof}
Let $\sigma\in (0,\tau).$ Set $v=u_1-u_2$. Then we have $$\|v\|_{L^\infty(0,\sigma : L^\infty)}\leq \|u_1\|_{L^\infty(0,\sigma : L^\infty)}+ \|u_2\|_{L^\infty(0,\sigma : L^\infty)}<\infty.$$
It follows from (1.4) and $\mathrm{(K1)}$ that
\begin{align*}
|v(x,\tilde{t})|&\leq \|v(t)\|_\infty+\int^{\tilde{t}}_t\int_{\rr^n}\Gamma(x,y,\tilde{t}-s)|u_1(y,s)|^p-u_2(y,s)^p|w(y)dyds\\
&\leq \|v(t)\|_\infty+C_1\int^{\tilde{t}}_t\int_{\rr^n}\Gamma(x,y,\tilde{t}-s)|v(y,s)|w(y)dyds\\
&\leq \|v(t)\|_\infty+C_1\sup_{t<\tau\leq \tilde{t}}\|v(\tau)\|_\infty(\tilde{t}-t)
\end{align*}
for almost all $x\in \rr^n$ and all $0\leq t<\tilde{t}\leq \sigma$, where $C_1$ is a positive constant depending only on $p$, $\|u_1\|_{L^\infty(0,\sigma : L^\infty)}$ and
$\|u_2\|_{L^\infty(0,\sigma : L^\infty)}$. This implies that $$\sup_{t<\tau\leq \tilde{t}}\|v(\tau)\|_\infty\leq \|v(t)\|_\infty+C_1\sup_{t<\tau\leq \tilde{t}}\|v(\tau)\|_\infty(\tilde{t}-t)\eqno(4.2)$$
for all $0\leq t<\tilde{t}\leq \sigma.$

Let $\varepsilon$ be a sufficiently small positive constant such that $C_1\varepsilon\leq 1/2$ and $\varepsilon<\sigma$. Then, using (4.2), we obtain $$\sup_{t<\tau\leq t+\varepsilon}\|v(\tau)\|_\infty\leq 2\|v(t)\|_\infty$$
for all $t\in [0,\sigma-\varepsilon]$. Hence there exists a constant $C_2$ such that $$\sup_{0<t\leq \sigma}\|v(t)\|_\infty\leq C_2\|v(0)\|_\infty,$$
and we have inequality (4.1). Thus the proof of Lemma 4.1 is complete.
\end{proof}

Next we prove local existence of solutions of (1.3). For any nonnegative function $u_0\in L^\infty$, we define $\{u_n\}$ inductively by
\begin{align*}
u_1(x,t)&:=\int_{\rr^n}\Gamma(x,y,t)u_0(y)w(y)dy\\
u_{n+1}(x,t)&:=u_1(x,t)+\int^t_0\int_{\rr^n}\Gamma(x,y,t-s)u_n(y,s)^pw(y)dyds,\quad n=1,2,\ldots,\tag{4.3}
\end{align*}
for almost all $x\in \rr^n$ and all $t>0.$ Then we can prove inductively that $$0\leq u_n(x,t)\leq u_{n+1}(x,t)\eqno(4.4)$$
for almost all $x\in \rr^n, t>0$ and all $n\in \{1,2,\ldots\}$. Indeed, we clearly obtain $u_2\geq u_1$ since $\Gamma, w$ and $u_1$ are nonnegative functions, and if there exists a number $k\in \{1,2,\ldots\}$ such that $u_k(x,t)\leq u_{k+1}(x,t)$ for almost all $x\in \rr^n$ and all $t>0$, then
\begin{align*}
u_{k+2}(x,t)&=u_1(x,t)+\int^t_0\int_{\rr^n}\Gamma(x,y,t-s)u_{k+1}(y,s)^pw(y)dyds\\
&\geq u_1(x,t)+\int^t_0\int_{\rr^n}\Gamma(x,y,t-s)u_{k}(y,s)^pw(y)dyds=u_{k+1}(x,t)
\end{align*}
for $x\in \rr^n$ and $t>0$. This implies that $(4.4)$ holds true for all $n\in \{1,2,\ldots\}$. Hence we can see that the limit function $$u_*(x,t):=\lim_{n\to \infty}u_n(x,t)\in [0,\infty]\eqno(4.5)$$
can be defined for all almost $x\in \rr^n$ and all $t>0$. Furthermore, by Lemma 2.3 and Lemma 2.5 we can set a constant $c_{**}=\max\{c_1,c_2\}$ such that
\begin{align*}
&\sup_{0<t<\infty}\|u_1(t)\|_\infty\leq c_{**}\|u_0\|_\infty,\\\tag{4.6}
&\sup_{0<t<\infty}t^{\frac{n+\alpha}{2}\left(\frac{1}{r_*}-\frac{1}{q}\right)}\|u_1(t)\|_{L^{q,\infty}(w)}\leq c_{**}\|u_0\|_{L^{r_*,\infty}(w)},
\end{align*}
for any $q\in [r_*,\infty]$ if $u_0\in L^{r_*,\infty}(w)\cap L^\infty,$ where $c_1$ and $c_2$ are constants given in Lemma 2.3 and Lemma 2.5, respectively.
Then we have the following lemma, which implies the local existence of solutions of (1.3). (See also \cite[Lemma 3.2]{FI1} and \cite[Lemma 3.1]{IK1}.)
\begin{lemma}
Let $u_0\in L^\infty$. Then there exists  a positive constant $T$ such that the problem $(1.3)$ has a unique solution $u$ of $(1.3)$ in $\rr^n\times(0,T)$ satisfying
$$\sup_{0<t<T}\|u(t)\|_\infty\leq 2c_{**}\|u_0\|_\infty.\eqno(4.7)$$
Here $c_{**}$ is the constant given in $(4.6).$
\end{lemma}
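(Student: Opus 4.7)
The plan is to exploit the monotone sequence $\{u_n\}$ already constructed in (4.3)--(4.5): I will show that, for a sufficiently small time horizon $T>0$, the iterates $u_n$ stay uniformly bounded in $L^\infty(\mathbb{R}^n\times(0,T))$, so that the pointwise limit $u_*$ inherits the same bound and, by monotone convergence, solves the mild equation (1.4). Uniqueness then comes for free from Lemma 4.1.

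Set $M:=2c_{**}\|u_0\|_\infty$, where $c_{**}$ is the constant in (4.6). The first step is to prove inductively that
\begin{equation*}
\sup_{0<t<T}\|u_n(t)\|_\infty\leq M\qquad\text{for all }n\geq 1,
\end{equation*}
provided $T$ is chosen small enough depending on $\|u_0\|_\infty$ and $p$. The base case $n=1$ follows directly from the first line of (4.6), which gives $\|u_1(t)\|_\infty\leq c_{**}\|u_0\|_\infty=M/2$. For the induction step, apply the $L^\infty$-norm to (4.3) and use property $(\mathrm{K1})$ to compute
\begin{equation*}
\|u_{n+1}(t)\|_\infty\leq \|u_1(t)\|_\infty+\int_0^t\|u_n(s)\|_\infty^{p}\,ds\leq \frac{M}{2}+TM^{p}.
\end{equation*}
Choosing $T\leq 1/(2M^{p-1})$ forces the right-hand side to stay below $M$, closing the induction.

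Next I pass to the limit. Since $0\leq u_n\uparrow u_*$ pointwise a.e.\ by (4.4)--(4.5) and each $u_n$ is bounded by $M$ on $\mathbb{R}^n\times(0,T)$, the limit satisfies $0\leq u_*\leq M$ a.e.; in particular $u_*\in L^\infty(0,T;L^\infty(\mathbb{R}^n))$, which gives (4.7). The monotone convergence theorem applied to the $(y,s)$-integral in (4.3) yields
\begin{equation*}
u_*(x,t)=\int_{\mathbb{R}^n}\Gamma(x,y,t)u_0(y)w(y)\,dy+\int_0^t\!\!\int_{\mathbb{R}^n}\Gamma(x,y,t-s)u_*(y,s)^p w(y)\,dy\,ds,
\end{equation*}
so $u_*$ is a solution of (1.3) in $\mathbb{R}^n\times(0,T)$ in the sense of Definition~1.1.

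For uniqueness, suppose $\tilde u$ is another solution in $\mathbb{R}^n\times(0,T)$ with $\tilde u(\cdot,0)=u_0$ and $\tilde u\in L^\infty(0,T;L^\infty)$; then Lemma 4.1 (applied with $u_{0,1}=u_{0,2}=u_0$ on any $\sigma\in(0,T)$) gives $\tilde u\equiv u_*$ on $\mathbb{R}^n\times(0,\sigma)$, and letting $\sigma\uparrow T$ concludes the proof.

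The only place where care is needed is the induction on $\|u_n(t)\|_\infty$: one must verify that the constant $c_{**}$ in (4.6) is independent of $n$ and that the estimate $\int_{\mathbb{R}^n}\Gamma(x,y,t-s)w(y)\,dy=1$ from $(\mathrm{K1})$ lets us cleanly bound the nonlinear term without using any derivative information on $\Gamma$. Once this is set up, the choice $T\leq 1/(2M^{p-1})$ is completely explicit, so I do not expect any serious obstacle beyond bookkeeping.
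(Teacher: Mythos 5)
Your proposal is correct and follows essentially the same route as the paper: the same monotone iteration (4.3)--(4.5), the same induction giving a uniform $L^\infty$ bound $2c_{**}\|u_0\|_\infty$ on the iterates for small $T$, passage to the limit, and uniqueness via Lemma 4.1. The only cosmetic difference is that you bound the Duhamel term directly by $(\mathrm{K1})$ rather than quoting Lemma 2.3 with $q=r=\infty$, which is equivalent.
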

\begin{proof}
Let $T$ be a sufficiently small positive constant to be chosen later. By induction we prove that $$\sup_{0<t<T}\|u_n(t)\|_\infty\leq 2c_{**}\|u_0\|_\infty\eqno(4.8)$$
for all $n=1,2,\ldots$. By (4.6) we get $(4.8)$ for $n=1.$ Assume that $(4.8)$ holds for some $n=n_*\in\{1,2,\ldots\}$, that is, $$\sup_{0<t<T}\|u_{n_*}(t)\|_\infty\leq 2c_{**}\|u_0\|_\infty.$$
Then, by $(4.3)$ and Lemma 2.3 we obtain
\begin{align*}
\|u_{n_*+1}(t)\|_\infty&\leq \|u_1(t)\|_\infty+\int^t_0\|S(t-s)u_{n_*}(s)^p w\|_\infty ds\\\tag{4.9}
&\leq c_{**}\|u_0\|_\infty+C_1\int^t_0\|u_{n_*}(s)\|^p_\infty ds\\
&\leq c_{**}\|u_0\|_\infty+C_1T(2 c_{**}\|u_0\|_\infty)^p
\end{align*}
for all $t\in (0,T)$, where $C_1$ is a constant independent of $n_*$ and $T$. Let $T$ be a sufficiently small constant such that $$C_1T2^p(c_{**}\|u_0\|_\infty)^{p-1}\leq 1.$$
Then by (4.9) we get $(4.8)$ for $n=n_*+1$. Hence (4.8) holds for all $n=1,2,\ldots$. By (4.4), (4.5) and $(4.8)$ we have that the limit function $u_*$ satisfies (1.4) and $$\sup_{0<t<T}\|u_*(t)\|_\infty\leq 2c_{**}\|u_0\|_\infty.$$
This together with Lemma 4.1 implies that the function $u=u_*$ is a solution of (1.3) in $\rr^n\times(0,T)$. Thus we complete the proof of Lemma 4.2.
\end{proof}

Now we are ready to prove Theorem 1.2.\\
\\
{\bf{Proof of the assertion $\mathrm{\bf{({i})}}$} of Theorem 1.2} Assume (1.12). Let $\delta$ be a sufficiently small positive constant.
Assume (1.13). By induction we prove
\begin{align*}
\|u_n(t)\|_{L^{r_*,\infty}(w)}&\leq 2c_{**}\delta,\\\tag{4.10}
\|u_n(t)\|_\infty&\leq 2c_{**}\delta t^{-\frac{n+\alpha}{2r_*}},
\end{align*}
for all $t>0.$ By (4.6) we get $(4.10)$ for $n=1$. Assume that $(4.10)$ holds for some $n=n_*\in\{1,2,\ldots\},$ that is,
\begin{align*}
\|u_{n_*}(t)\|_{L^{r_*,\infty}(w)}&\leq 2c_{**}\delta,\\\tag{4.11}
\|u_{n_*}(t)\|_\infty&\leq 2c_{**}\delta t^{-\frac{n+\alpha}{2r_*}},
\end{align*}
for all $t>0.$ These together with $(1.10)$ imply that $$\|u_{n_*}(t)\|_{L^{q,\infty}(w)}\leq \|u_{n_*}(t)\|_{L^{r_*,\infty}(w)}^{\frac{r_*}{q}}\|u_{n_*}(t)\|_\infty^{1-\frac{r_*}{q}}\leq 2c_{**}\delta t^{-\frac{n+\alpha}{2}\left(\frac{1}{r_*}-\frac{1}{q}\right)}\eqno(4.12)$$
for all $t>0$ and $r_*<q<\infty.$ Since $r_*=\frac{n+\alpha}{2}(p-1)$, by (4.11) we get $$\|u_{n_*}(t)^p\|_{L^\infty}=\|u_{n_*}(t)\|^p_\infty\leq \left(2c_{**}\delta t^{-\frac{n+\alpha}{2r_*}}\right)^p=(2c_{**}\delta)^pt^{-\frac{p}{p-1}}\eqno(4.13)$$
for all $t>0.$ Similarly, for any $\eta>1$ with $\eta\leq r_*<\eta p$, by (4.12) we obtain $$\|u_{n_*}(t)^p\|_{L^{\eta,\infty}(w)}=\|u_{n_*}(t)\|^p_{L^{\eta p,\infty}(w)}\leq \left(2c_{**}\delta t^{-\frac{n+\alpha}{2}\left(\frac{1}{r_*}-\frac{1}{\eta p}\right)}\right)^p\leq C\delta^p t^{\frac{n+\alpha}{2\eta}-\frac{p}{p-1}}\eqno(4.14)$$
for all $t>0.$ Hence, by Lemma 2.3, Lemma 2.5, (4.13) and (4.14) with $\eta=r_*$ we deduce
\begin{align*}
\left\|\int^t_{\frac{t}{2}}S(t-s)u_{n_*}(s)^pwds\right\|_\infty&\leq \int^t_{\frac{t}{2}}\|S(t-s)u_{n_*}(s)^pw\|_\infty ds\\\tag{4.15}
&\leq \int^t_{\frac{t}{2}}\|u_{n_*}(s)^p\|_\infty ds\\
&\leq C\delta^p\int^t_{\frac{t}{2}}s^{-\frac{p}{p-1}}ds\leq C\delta^p t^{-\frac{1}{p-1}}
\end{align*}
and
\begin{align*}
\left\|\int^t_{\frac{t}{2}}S(t-s)u_{n_*}(s)^pwds\right\|_{L^{r_*,\infty}(w)}&\leq \int^t_{\frac{t}{2}}\|S(t-s)u_{n_*}(s)^pw\|_{L^{r_*,\infty}(w)} ds\\\tag{4.16}
&\leq \int^t_{\frac{t}{2}}\|u_{n_*}(s)^p\|_{L^{r_*,\infty}(w)}ds\\
&\leq C\delta^p\int^t_{\frac{t}{2}}s^{-1}ds\leq C\delta^p
\end{align*}
for all $t>0$. On the other hand, by Lemma 2.5, (4.13) and  (4.14) with $\eta<r_*$ we deduce
\begin{align*}
\left\|\int^{\frac{t}{2}}_0 S(t-s)u_{n_*}(s)^pwds\right\|_\infty&\leq \int_0^{\frac{t}{2}}\|S(t-s)u_{n_*}(s)^pw\|_\infty ds\\\tag{4.17}
&\leq C\int _0^{\frac{t}{2}}(t-s)^{-\frac{n+\alpha}{2\eta}}\|u_{n_*}(s)^p\|_{L^{\eta,\infty}(w)} ds\\
&\leq C\delta^pt^{-\frac{n+\alpha}{2\eta}}\int_0^{\frac{t}{2}}s^{\frac{n+\alpha}{2\eta}-\frac{p}{p-1}}ds\leq C\delta^p t^{-\frac{1}{p-1}}
\end{align*}
and
\begin{align*}
\left\|\int^{\frac{t}{2}}_0 S(t-s)u_{n_*}(s)^pwds\right\|_{L^{r_*,\infty}(w)}&\leq \int_0^{\frac{t}{2}}\|S(t-s)u_{n_*}(s)^pw\|_{L^{r_*,\infty}(w)} ds\\\tag{4.18}
&\leq C\int _0^{\frac{t}{2}}(t-s)^{-\frac{n+\alpha}{2}\left(\frac{1}{\eta}-\frac{1}{r_*}\right)}\|u_{n_*}(s)^p\|_{L^{\eta,\infty}(w)} ds\\
&\leq C\delta^pt^{-\frac{n+\alpha}{2}\left(\frac{1}{\eta}-\frac{1}{r_*}\right)}\int^{\frac{t}{2}}_0 s^{\frac{n+\alpha}{2\eta}-\frac{p}{p-1}}ds\leq C\delta^p
\end{align*}
for all $t>0.$ Then, taking a sufficiently small $\delta$ if necessary, it follows from  (4.6), (4.15), (4.16), (4.17), (4.18) and $r_*=\frac{n+\alpha}{2}(p-1)$ that
\begin{align*}
t^{\frac{1}{p-1}}\|u_{n_*+1}(t)\|_\infty&\leq c_{**} \delta+C_1\delta^p\leq 2c_{**}\delta\\
\|u_{n_*+1}(t)\|_{L^{r_*,\infty}(w)}&\leq  c_{**} \delta+C_1\delta^p\leq 2c_{**}\delta
\end{align*}
for all $t>0$. Here $C_1$ is a constant independent of $n_*$ and $\delta$. Therefore we obtain (4.10) for $n=n_*+1$. Hence $(4.10)$ holds for all $n=1,2,\ldots$.
Hence, applying a similar argument as in the proof of Lemma 4.2, by (4.10) we have that there exists a unique  global-in-time solution $u$ of (1.3) such that
$$\|u(t)\|_{L^{r_*,\infty}(w)}\leq 2c_{**}\delta,\quad \|u(t)\|_\infty\leq 2c_{**}\delta t^{-\frac{1}{p-1}}$$
for all $t>0.$ This together with (4.7) implies that $$\|u(t)\|_\infty\leq C(1+t)^{-\frac{1}{p-1}}$$
for all $t>0$. Furthermore, using the interpolation inequality (1.10) and $r_*=\frac{n+\alpha}{2}(p-1)$, we obtain $$\|u(t)\|_{L^{q,\infty}(w)}\leq C(1+t)^{-\frac{1}{p-1}\left(1-\frac{r_*}{q}\right)}=C(1+t)^{-\frac{n+\alpha}{2}\left(\frac{1}{r_*}-\frac{1}{q}\right)},\quad r_*\leq q\leq \infty,$$
for all $t>0.$ Thus we get (1.14), and the proof of the assertion $\mathrm{(i)}$ of Theorem 1.2 is complete.\qed\\
\\
{\bf{Proof of the assertion $\mathrm{(\bf{ii})}$ of Theorem 1.2}}\quad Assume (1.12). Let $\delta$ be a sufficiently small constant and assume (1.15). Then, it follows from (1.10) and the assertion of $\mathrm{(i)}$ of Theorem 1.2 that there exists a unique global-in-time solution $u$ of (1.3) satisfying (1.14).

We next prove the existence of global-in-time solution of (1.3) satisfying (1.16). For $r=r_*$, it follows from a similar argument as in the proof of the assertion $\mathrm{(i)}$ of Theorem 1.2. So we assume $1\leq r<r_*.$ Set $$u_{0,\lambda}(x):=\lambda^\beta u_0(\lambda x),\quad u_{n,\lambda}(x,t):=\lambda^\beta u_n(\lambda x, \lambda^{2}t),$$
where $\beta=(n+\alpha)/r^*$ and $\lambda$ is a positive constant such that $$\|u_{0,\lambda}\|_{L^r(w)}=\|u_{0,\lambda}\|_\infty.$$
Since $$\|u_{0,\lambda}\|^{\frac{r}{r_*}}_{L^r(w)}\|u_{0,\lambda}\|^{1-\frac{r}{r_*}}_\infty=\|u_0\|^{\frac{r}{r_*}}_{L^r(w)}\|u_0\|^{1-\frac{r}{r_*}}_\infty,$$ it follows from (1.15) that $$\|u_{0,\lambda}\|_{L^r(w)}=\|u_{0,\lambda}\|_\infty<\delta.\eqno (4.19)$$
On the other hand, $u_{n,\lambda}$ satisfies $$u_{n,\lambda}(t)=S(t-\tau)u_{0,\lambda}(\tau)w+\int^t_\tau S(t-s)u_{n-1,\lambda}(s)^pwds,\eqno(4.20)$$
for all $t>\tau\geq 0.$ Moreover, by Lemma 2.3, (4.6) and (4.19) we can find a constant $C_{**}$ independent of $\delta$, $q$ and $r$, such that
$$\|S(t)u_{0,\lambda}w\|_{L^q(w)}\leq C_{**}\delta(1+t)^{-\frac{n+\alpha}{2}\left(\frac{1}{r}-\frac{1}{q}\right)},\quad t>0,\eqno(4.21)$$
for any $q\in[r,\infty].$

By induction we prove that $$\|u_{n,\lambda}(t)\|_{L^q(w)}\leq 2C_{**}\delta,\quad 0<t\leq 2,\eqno(4.22)$$
for any $q\in[r,\infty]$ and $n=1,2,\ldots.$ By (4.21) we get $(4.22)$ for $n=1.$ Assume that $(4.22)$ holds for some $n=n_*$, that is ,$$\|u_{n_*,\lambda}(t)\|_{L^q(w)}\leq 2C_{**}\delta,\quad 0<t\leq 2,\eqno(4.23)$$
for any $q\in[r,\infty]$. Then, by (4.23) we get $$\|u_{n_*,\lambda}(t)^p\|_{L^q(w)}=\|u_{n_*,\lambda}(t)\|^p_{L^{pq}(w)}\leq (2C_{**}\delta)^p,\quad 0<t\leq 2,\eqno(4.24)$$for any $q\in [r,\infty]$.
Choosing a sufficiently small $\delta$ if necessary, by Lemma 2.3, (4.20), (4.21) and $(4.24)$ we deduce
\begin{align*}
\|u_{n_*+1,\lambda}(t)\|_{L^q(w)}&\leq \|S(t)u_{0,\lambda}w\|_{L^q(w)}+\int^t_0\|S(t-s)u_{n_*,\lambda}(s)^pw\|_{L^q(w)}ds\\\tag{4.25}
&\leq \|S(t)u_{0,\lambda}w\|_{L^q(w)}+C_1\int^t_0\|u_{n_*,\lambda}(s)^p\|_{L^q(w)}ds\\
&\leq C_{**}\delta+C_2\delta^p\leq 2C_{**}\delta,
\end{align*}
for all $0<t\leq 2$ and for any $q\in [r,\infty]$, where $C_1$ and $C_2$ are constants independent of $n_*$ and $\delta.$ Hence we have $(4.22)$ for $n=n_*+1$, and $(4.22)$ holds for all $n=1,2,\ldots$.

Let $C_*$ be a constant to be chosen later such that $C_*\geq 2C_{**}$. By induction we prove that $$\|u_{n,\lambda}(t)\|_{L^q(w)}\leq C_*\delta t^{-\frac{n+\alpha}{2}\left(\frac{1}{r}-\frac{1}{q}\right)},\quad t>1/2,\eqno(4.26)$$
for any $q\in [r,\infty]$ and $n=1,2,\ldots$. By (4.21) we get $(4.26)$ for $n=1$. Assume that $(4.26)$ holds for some $n=n_*$. Then, similarly to (4.25), since $r_*=\frac{n+\alpha}{2}(p-1)>r,$ choosing a sufficiently small  $\delta$ if necessary, by Lemma 2.3, (4.20) and $(4.22)$ we deduce
\begin{align*}
\|u_{n_*+1,\lambda}(t)\|_{L^q(w)}&\leq C_3(t-1/2)^{-\frac{n+\alpha}{2}\left(\frac{1}{r}-\frac{1}{q}\right)}\|u_{n_*+1,\lambda}\left(1/2\right)\|_{L^r(w)}\\
&\quad +C_3\int^{\frac{t}{2}}_{\frac{1}{2}}(t-s)^{-\frac{n+\alpha}{2}\left(\frac{1}{r}-\frac{1}{q}\right)}\|u_{n_*,\lambda}(s)^p\|_{L^r(w)}ds\\
&\quad +C_3\int^t_{\frac{t}{2}}\|u_{n_*,\lambda}(s)^p\|_{L^q(w)}ds\\
&\leq C_4C_{**}\delta t^{-\frac{n+\alpha}{2}\left(\frac{1}{r}-\frac{1}{q}\right)}+C_4(C_*\delta)^p t^{-\frac{n+\alpha}{2}\left(\frac{1}{r}-\frac{1}{q}\right)}\int^\frac{t}{2}_{\frac{1}{2}}s^{-\frac{r_*}{r}}ds\\
&\quad +C_4(C_*\delta)^p\int^t_{\frac{t}{2}}s^{-\frac{n+\alpha}{2}\left(\frac{p}{r}-\frac{1}{q}\right)}ds\\
&\leq C_5C_{**}\delta t^{-\frac{n+\alpha}{2}\left(\frac{1}{r}-\frac{1}{q}\right)}+C_5(C_*\delta)^pt^{-\frac{n+\alpha}{2}\left(\frac{1}{r}-\frac{1}{q}\right)}
\end{align*}
for all $t>1,$ where $C_3$, $C_4$ and $C_5$ are constants independent of $n_*$ and $\delta$. Let $C_*\geq 2C_5C_{**}$.
Then, choosing a sufficiently small $\delta$ if necessary, we get $$\|u_{n_*+1,\lambda}(t)\|_{L^q(w)}\leq C_*\delta t^{-\frac{n+\alpha}{2}\left(\frac{1}{r}-\frac{1}{q}\right)},\quad t>1.$$
This together with (4.23) implies (4.26) with $n=n_*+1$. Hence (4.26) holds for all $n=1,2,\ldots$.

It follows from (4.22) and $(4.26)$ that there exists a constant $C$ such that $$\|u_{n,\lambda}(t)\|_{L^q(w)}\leq C\delta(1+t)^{-\frac{n+\alpha}{2}\left(\frac{1}{r}-\frac{1}{q}\right)},\quad t>0,$$
for any $q\in [r,\infty]$ and $n=1,2,\ldots$. This implies that $$\|u_n(t)\|_{L^q(w)}\leq C(1+t)^{-\frac{n+\alpha}{2}\left(\frac{1}{r}-\frac{1}{q}\right)},\quad t>0,$$
for any $q\in [r,\infty]$ and $n=1,2,\ldots$. By the same argument as in the proof of the assertion $\mathrm{(i)}$ of Theorem 1.2, we see that there exists a solution $u$ of (1.3) satisfying (1.16). Therefore the assertion $\mathrm{(ii)}$ of Theorem 1.2 follows, and the proof of Theorem 1.2 is complete.\qed
\\
\\
{\bf{Proof of Corollary 1.1}} Since $r_*=\frac{n+\alpha}{2}(p-1),$ by $(1.8)$ and (1.17) we can find a constant $C_1$ independent of $\delta$ such that $$\|u_0\|_{L^{r_*,\infty}(w)}\leq C_1\delta.$$
Hence, by the assertion $\mathrm{(i)}$ of Theorem 1.2 we have that, if $\delta$ is sufficiently small, then a global-in-time solution of (1.3) exists and it satisfies (1.14). Therefore the proof of Corollary 1.1 is complete. \qed

LMAM, School of Mathematical Sciences, Peking University, Beijing, 100871, P. R. China

Xi Hu,\quad
E-mail address: huximath1994@163.com

Lin Tang,\quad
E-mail address: tanglin@math.pku.edu.cn

\begin{thebibliography}{99}

\bibitem{CS} L.A. Caffarelli, L.E. Silvestre, An extension problem related to the fractional Laplacian, Comm. Partial Differential Equations 32 (7-9) (2007) 1245-1260.

\bibitem{CH1} F. Chiarenza, R. Serapioni, Degenerate parabolic equations and Harnack inequality, Ann. Mat. Pura Appl. 137 (4) (1984) 139-162.

\bibitem{CH} F. Chiarenza, R. Serapioni, A remark on a Harnack inequality for degenerate parabolic equations, Rend. Sem. Mat. Univ. Padova 73 (1985) 179-190.

\bibitem{CU} D. Cruz-Uribe, C. Rios, Gaussian bounds for degenerate parabolic equations, J. Funct. Anal. 255 (2) (2008) 283-312; J. Funct. Anal.  267 (9) (2014) 3507-3513 (Corrigendum).

\bibitem{FI1} M. Fila, K. Ishige, T. Kawakami, Convergence to the Poisson kernel for the Laplace equation with a nonlinear dynamical boundary condition, Commun. Pure Appl. Anal. 11 (3) (2012) 1285-1301.

\bibitem{FI} M. Fila, K. Ishige, T. Kawakami, Large-time behavior of solutions of a semilinear elliptic equation with a dynamical boundary condition, Adv. Differential Equations 18 (1-2) (2013) 69-100.


\bibitem{FR} A. Friedman, Partial differential equations of parabolic type, Prentice-Hall, Inc., Englewood Cliffs (1964).

\bibitem{FH}  H. Fujita,  On the blowing up of solutions of the Cauchy problem for $u_t=\Delta u+u^{1+\alpha}$, J. Fac. Sci. Univ. Tokyo Sect. I 13  (1966) 109-124.


\bibitem{FU} Y. Fujishima, T. Kawakami, Y. Sire, Critical exponent for the global existence of solutions to a semilinear heat equation with degenerate coefficients, Calc. Var. Partial Differential Equations 58 (2) (2019) Paper No. 62, 25 pp.

\bibitem{GL} V.A. Galaktionov, H.A. Levine, A general approach to critical Fujita exponents in nonlinear parabolic problems, Nonlinear Anal. 34 (7) (1998) 1005-1027.

\bibitem{GR} L. Grafakos, Classical Fourier analysis, Second edition. Graduate Texts in Mathematics, Springer, New York, 2008.


\bibitem{HW} A. Haraux, F.B. Weissler, Nonuniqueness for a semilinear initial value problem, Indiana Univ. Math. J. 31 (2) (1982) 167-189.


\bibitem{IK} K. Ishige, Y. Kabeya, E. M. Ouhabaz, The heat kernel of a Schr\"{o}dinger operator with inverse square potential, Proc. Lond. Math. Soc. 115 (3) (2017) 381-410.

\bibitem{IK0} K. Ishige, T. Kawakami, Critical Fujita exponents for semilinear heat equations with quadratically decaying potential, Indiana Univ. Math. J. 69 (6) (2020) 2171-2207.

\bibitem{IK1} K. Ishige, T. Kawakami, K. Kobayashi, Global solutions for a nonlinear integral equation with a generalized heat kernel, Discrete Contin. Dyn. Syst. Ser. S 7 (4) (2014) 767-783.

\bibitem{IK3} K. Ishige, T. Kawakami, K. Kobayashi, Asymptotics for a nonlinear integral equation with a generalized heat kernel, J. Evol. Equ. 14 (4-5) (2014) 749-777.

\bibitem{IK4} K. Ishige, T. Kawakami, H. Michihisa, Asymptotic expansions of solutions of fractional diffusion equations, SIAM J. Math. Anal. 49 (3) (2017) 2167-2190.

\bibitem{IK2} K. Ishige, T. Kawakami, M. Sier\.{z}\c{e}ga, Supersolutions for a class of nonlinear parabolic systems, J. Differential Equations 260 (7) (2016) 6084-6107.




\bibitem{LH} H.A. Levine, The role of critical exponents in blowup theorems, SIAM Rev. 32 (2) (1990) 262-288.

\bibitem{MI} N. Mizoguchi, E. Yanagida, Critical exponents for the blow-up of solutions with sign changes in a semilinear parabolic equation, Math. Ann. 307 (4) (1997) 663-675.

\bibitem{MB} B. Muckenhoupt, Weighted norm inequalities for the Hardy maximal function, Trans. Amer. Math. Soc. 165 (1972) 207-226.

\bibitem{PR} R.G. Pinsky, Existence and nonexistence of global solutions for $u_t=\Delta u+ a(x)u^p$ in $\rr^d$,
J. Differential Equations 133 (1) (1997) 152-177.

\bibitem{WF} F.B.  Weissler, Semilinear evolution equations in Banach spaces, J. Funct. Anal. 32 (3) (1979) 277-296.

\bibitem{WF1} F.B. Weissler, Local existence and nonexistence for semilinear parabolic equations in $L^p$, Indiana Univ. Math. J. 29 (1) (1980) 79-102.

\bibitem{WF2} F.B. Weissler, Existence and nonexistence of global solutions for a semilinear heat equation, Israel J. Math. 38 (1-2) (1981) 29-40.

\end{thebibliography}
\end{document}